\DeclareSymbolFont{AMSb}{U}{msb}{m}{n}
\documentclass[11pt,a4paper,twoside,leqno,noamsfonts]{amsart}
           \makeatletter
           \newcommand{\mylabel}[2]{#2\def\@currentlabel{#2}\label{#1}}
           \renewcommand\@biblabel[1]{#1.}
           \makeatother
\usepackage[english]{babel}
\usepackage[dvipsnames]{xcolor}
\definecolor{britishracinggreen}{rgb}{0.0, 0.26, 0.15}
\definecolor{cobalt}{rgb}{0.0, 0.28, 0.67}
\usepackage[utopia]{mathdesign}
    \DeclareSymbolFont{usualmathcal}{OMS}{cmsy}{m}{n}
    \DeclareSymbolFontAlphabet{\mathcal}{usualmathcal}
\usepackage[a4paper,top=3cm,bottom=3cm,left=2.8cm,
           right=2.8cm,bindingoffset=5mm]{geometry}
\usepackage[utf8]{inputenc}
\usepackage{braket,caption,comment,mathtools,stmaryrd}
\usepackage{multirow,booktabs,microtype}
\usepackage[colorlinks,bookmarks]{hyperref} %
\hypersetup{colorlinks,%
            citecolor=britishracinggreen,%
            filecolor=black,%
            linkcolor=cobalt,%
            urlcolor=black}
\setcounter{tocdepth}{1}
\setcounter{section}{-1}
\numberwithin{equation}{section}


\makeatletter

\makeatother

\def\be{\begin{equation}}    
\def\ee{\end{equation}}
\def\bitem{\begin{itemize}}
\def\eitem{\end{itemize}}
\def\benum{\begin{enumerate}}
\def\eenum{\end{enumerate}}

\def\into{\hookrightarrow}
\def\onto{\twoheadrightarrow}

\def\op{\textrm{op}}
\def\isom{\cong}

\def\Var{\mathrm{Var}}
\def\Sch{\mathrm{Sch}}
\def\Sets{\mathrm{Sets}}
\def\Def{\mathsf{Def}}

\def\st{\mathrm{st}}

\def\L{\mathbb L}
\def\A{\mathbb A}

\def\C{\mathbb C}

\def\P{\mathbb P}
\def\Q{\mathbb Q}
\def\G{\mathbb G}
\def\L{\mathbb{L}}
\def\SS{\mathcal S}
\def\RR{\mathbf R}

\def\Z{\mathbb Z}
\def\ext{\mathrm{ext}}

\def\O{\mathscr O}
\def\DDT{\mathsf{DT}}

\def\vd{\mathrm{vd}}

\def\MF{\mathsf{MF}}

\newcommand{\prsigma}{{}^{\diamond}\!\!\sigma}
\newcommand{\prExp}{{}^{\diamond}\!\!\Exp}
\DeclareMathOperator{\Quot}{Quot}
\DeclareMathOperator{\DD}{D}
\DeclareMathOperator{\Hilb}{Hilb}
\DeclareMathOperator{\eff}{eff}
\DeclareMathOperator{\Gr}{Gr}

\DeclareMathOperator{\ch}{ch}

\DeclareMathOperator{\tr}{tr}
\DeclareMathOperator{\id}{id}

\DeclareMathOperator{\Rep}{Rep}

\DeclareMathOperator{\vir}{vir}
\DeclareMathOperator{\QCoh}{QCoh}
\DeclareMathOperator{\Coh}{Coh}

\DeclareMathOperator{\Spec}{Spec\,}

\DeclareMathOperator{\Supp}{Supp\,}

\DeclareMathOperator{\GL}{GL}

\DeclareMathOperator{\dd}{d}
\DeclareMathOperator{\Tr}{Tr}

\DeclareMathOperator{\Sym}{Sym}

\DeclareMathOperator{\Ext}{Ext}

\DeclareMathOperator{\Hom}{Hom}
\DeclareMathOperator{\lHom}{{\mathscr Hom}}
\DeclareMathOperator{\RRlHom}{{\mathbf{R}\mathscr Hom}}

\DeclareMathOperator{\End}{End}

\DeclareMathOperator{\Exp}{Exp}
\DeclareMathOperator{\rk}{rk}

\DeclareMathOperator{\pr}{\diamond}
\DeclareMathOperator{\Art}{Art}

\theoremstyle{definition}

\newtheorem*{lemma*}{Lemma}
\newtheorem*{theorem*}{Theorem}
\newtheorem*{example*}{Example}
\newtheorem*{fact*}{Fact}
\newtheorem*{notation*}{Notation}
\newtheorem*{definition*}{Definition}
\newtheorem*{prop*}{Proposition}
\newtheorem*{remark*}{Remark}
\newtheorem*{corollary*}{Corollary}
\newtheorem*{conventions*}{Conventions}

\newtheorem{definition}{Definition}[section]
\newtheorem{example}[definition]{Example}

\newtheorem{aside}[definition]{Aside}

\newtheorem{notation}[definition]{Notation}
\newtheorem{remark}[definition]{Remark}
\newtheorem{conjecture}[definition]{Conjecture}

\newtheoremstyle{thm} 
        {3mm}
        {3mm}
        {\slshape}
        {0mm}
        {\bfseries}
        {.}
        {1mm}
        {}
\theoremstyle{thm}
\newtheorem{theorem}[definition]{Theorem}
\newtheorem{corollary}[definition]{Corollary}
\newtheorem{lemma}[definition]{Lemma}
\newtheorem{prop}[definition]{Proposition}
\newtheorem{thm}{Theorem}

\newtheoremstyle{ass} 
        {3mm}
        {3mm}
        {\rmfamily}
        {0mm}
        {\scshape}
        {.}
        {1mm}
        {}
\theoremstyle{ass}
\newtheorem{assumption}[definition]{Assumption}
\newtheorem{problem}[definition]{Problem}

\usepackage{tikz}
\usepackage{tikz-cd}
\usepackage{rotating}
\newcommand*{\isoarrow}[1]{\arrow[#1,"\rotatebox{90}{\(\sim\)}"
]}
\usetikzlibrary{matrix,shapes,arrows,decorations.pathmorphing}
\tikzset{commutative diagrams/arrow style=math font}
\tikzset{commutative diagrams/.cd,
mysymbol/.style={start anchor=center,end anchor=center,draw=none}}
\newcommand\MySymb[2][\square]{%
  \arrow[mysymbol]{#2}[description]{#1}}
\tikzset{
shift up/.style={
to path={([yshift=#1]\tikztostart.east) -- ([yshift=#1]\tikztotarget.west) \tikztonodes}
}
}

\DeclareMathAlphabet{\mathpzc}{OT1}{pzc}{m}{it}

\title[Virtual classes and virtual motives of Quot schemes on 3-folds]{Virtual classes and virtual motives \\ of Quot schemes on threefolds}

\author[Andrea T. Ricolfi]{Andrea T. Ricolfi}
\address{SISSA, Via Bonomea 265 Trieste}
\email[Andrea T. Ricolfi]{aricolfi@sissa.it}

\keywords{Virtual classes, (Motivic) Donaldson--Thomas invariants, Quot schemes.}
\subjclass[2010]{Primary 14N35; Secondary 14C05.}

\begin{document}
\maketitle

\begin{abstract}
For a simple, rigid vector bundle $F$ on a Calabi--Yau $3$-fold $Y$, we construct a \emph{symmetric obstruction theory} on the Quot scheme $\Quot_Y(F,n)$, and we solve the associated enumerative theory. We discuss the case of other $3$-folds.
Exploiting the critical structure on the local model $\Quot_{\A^3}(\O^{\oplus r},n)$, we construct a \emph{virtual motive} (in the sense of Behrend--Bryan--Szendr\H{o}i) for $\Quot_Y(F,n)$ for an arbitrary vector bundle $F$ on a smooth $3$-fold $Y$. We compute the associated motivic partition function. 
We obtain new examples of higher rank (motivic) Donaldson--Thomas invariants.
\end{abstract}

{\hypersetup{linkcolor=black}
\tableofcontents}


\section{Introduction}
\subsection*{Overview}
The goal of this paper is to show that, for a locally free sheaf $F$ on a complex $3$-fold $Y$, the Quot scheme
\[
\Quot_Y(F,n) = \Set{F\onto Q \,\big|\, \dim(Q)=0,\,\chi(Q)=n}
\]
carries a degree $0$ \emph{virtual fundamental class} (under suitable assumptions), as constructed by Behrend--Fantechi \cite{BFinc}, as well as a \emph{virtual motive} in the sense of Behrend--Bryan--Szendr\H{o}i \cite{BBS}. Therefore enumerative and motivic invariants can be attached to $\Quot_Y(F,n)$. Our results yield new explicit examples of higher rank \emph{Donaldson--Thomas invariants} and higher rank \emph{motivic Donaldson--Thomas invariants} of Calabi--Yau $3$-folds.

Our first main result (proved in Theorem \ref{THM:Ob_Theory}) is the following.

\begin{thm}\label{thmA}
Let $Y$ be a smooth complex projective $3$-fold, $F$ a simple rigid vector bundle on $Y$. Then $\Quot_Y(F,n)$ admits a $0$-dimensional perfect obstruction theory in the following situations:
\begin{enumerate}
    \item $H^i(Y,\O_Y)=0$ for $i>0$ and $F$ is exceptional;\label{exceptional_case}
    \item $Y$ is Calabi--Yau.
\end{enumerate}
In the Calabi--Yau case, the obstruction theory is symmetric.
\end{thm}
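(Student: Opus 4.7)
The plan is to construct a perfect obstruction theory on $\Quot_Y(F,n)$ out of the universal two-term complex $\mathbb{I}^\bullet = [p^*F \to \mathcal{Q}]$ on $Y \times \Quot_Y(F,n)$, which in the derived category is quasi-isomorphic to the universal subsheaf $\mathcal{S}$. Following the Huybrechts--Thomas and Pandharipande--Thomas approach to moduli of objects in the derived category, I take the trace-free relative Atiyah class of $\mathbb{I}^\bullet$ and produce the candidate
\[
\phi\colon E^\bullet \defeq \bigl(\mathbb{R}\pi_*\,\mathbb{R}\lHom(\mathbb{I}^\bullet,\mathbb{I}^\bullet)_0\bigr)^\vee[-1] \longrightarrow \L_{\Quot_Y(F,n)}.
\]
The general machinery then reduces the proof that $\phi$ is an obstruction theory to checking the correct identification on fibers together with two-term amplitude.

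Taking fibers at $[F \twoheadrightarrow Q]$ with kernel $S$, perfectness of amplitude $[-1,0]$ amounts to the vanishings $\Ext^0(S,S)_0 = \Ext^3(S,S)_0 = 0$. I would extract these from the short exact sequence $0 \to S \to F \to Q \to 0$ by chasing long exact sequences. A direct argument shows $\Hom(F,S) = 0$ whenever $Q \neq 0$: any $\psi\colon F \to S$ composed with $S \hookrightarrow F$ is a scalar endomorphism of $F$ by simplicity, and the scalar must vanish since $S \subsetneq F$. Combined with rigidity ($\Ext^1(F,F) = 0$), the vanishing $\Ext^{>0}(F,Q) = 0$ (immediate, since $F^\vee \otimes Q$ is $0$-dimensional), and $\Ext^1(Q,F) = 0$ (which follows from Serre duality in case~(2), and from $F$ exceptional together with $H^{>0}(\O_Y) = 0$ in case~(1)), one obtains $\Hom(S,F) = \mathbb{C}$, hence $\Hom(S,S) = \mathbb{C}$ and $\Ext^0(S,S)_0 = 0$. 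The dual vanishing $\Ext^3(S,S)_0 = 0$ follows in case~(2) by Serre duality, and in case~(1) by the analogous long-exact-sequence chase at the top.

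Applying $\Hom(S,-)$ to the same short exact sequence and using that the first connecting map identifies the one-dimensional spaces $\Hom(S,S)$ and $\Hom(S,F)$, one obtains $\Hom(S,Q) \cong \Ext^1(S,S)_0$, so that $h^{-1}(E^\bullet)$ at $[\phi]$ is the Zariski tangent space of $\Quot_Y(F,n)$; the obstruction identification is parallel. In case~(2), Serre duality on $Y$ yields a natural self-duality
\[
\mathbb{R}\lHom(\mathbb{I}^\bullet,\mathbb{I}^\bullet)_0 \isom \mathbb{R}\lHom(\mathbb{I}^\bullet,\mathbb{I}^\bullet)_0^\vee[-3],
\]
which descends through $\mathbb{R}\pi_*$ to the isomorphism $E^\bullet \isom (E^\bullet)^\vee[1]$ underlying a symmetric obstruction theory in the sense of Behrend--Fantechi.

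The main obstacle I anticipate is verifying that the Atiyah-class construction really produces an obstruction theory for $\Quot_Y(F,n)$, rather than just for the larger moduli of complexes into which $\Quot_Y(F,n)$ maps. This is precisely where simplicity and rigidity of $F$ enter essentially: they ensure that $F$ has no infinitesimal deformations of its own, so that the derived-moduli deformation theory of $\mathbb{I}^\bullet$ coincides with the deformation theory of the Quot point, and the construction descends to a bona fide obstruction theory on $\Quot_Y(F,n)$.
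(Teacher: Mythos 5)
Your proposal follows essentially the same route as the paper: the trace-free Atiyah class of the universal kernel (your $\mathbb I^\bullet\simeq\mathcal S$), Grothendieck duality to obtain the two-term complex, the vanishings $\Hom(S,S)_0=\Ext^3(S,S)_0=0$ for perfect amplitude in $[-1,0]$, and Serre duality plus a trivialisation of $\omega_Y$ for symmetry in the Calabi--Yau case. The ``main obstacle'' you flag at the end is precisely where the paper does its remaining work: it proves $\Hom(S,Q)\,\widetilde{\to}\,\Ext^1(S,S)$ and $\Ext^1(S,Q)\into\Ext^2(S,S)$, deduces from a standard deformation-theory lemma that $\Def_{F\onto Q}\to\Def_S$ is an isomorphism of deformation functors (equivalently, that $\Quot_Y(F,n)$ is open in the moduli space of simple sheaves via $[F\onto Q]\mapsto[S]$), and only then applies the Huybrechts--Thomas theorem verbatim to the universal kernel $\mathcal S$.
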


Under the assumptions of Theorem \ref{thmA}, one can see $\Quot_Y(F,n)$ as a fine moduli space of simple sheaves (the \emph{kernels} of the surjections), and form the Donaldson--Thomas partition function
\be\label{series:DT}
\DDT_F(q) = \sum_{n\geq 0} q^n \left(\int_{[\Quot_Y(F,n)]^{\vir}}1\right)   \in \Z\llbracket q \rrbracket.
\ee
In the Calabi--Yau case, we deduce (cf.~Corollary \ref{cor:DT_CY3}) the identity
\[
\DDT_F(q) = \mathsf M((-1)^rq)^{r\chi(Y)},
\] 
where $\mathsf M(q) = \prod_{m\geq 1}(1-q^m)^{-m}$
is the MacMahon function and $r=\rk F$. We conjecture a general formula for $\DDT_F(q)$, in the case where $(Y,F)$ satisfies \eqref{exceptional_case}, in Section \ref{sec:DT_general_threefold}.

\smallbreak
To state our second main result, let us fix an \emph{arbitrary} smooth $3$-fold $Y$, and a vector bundle $F$ on $Y$ of rank $r$. Let $K_0(\Var_{\C})$ be the Grothendieck ring of complex varieties, and let $\L = [\A^1]$ be the Lefschetz motive. In Section \ref{Sec:Virtual_Motives} we define motivic weights
\[
\bigl[\Quot_Y(F,n)\bigr]_{\vir} \in \mathcal M_{\C} = K_0(\Var_{\C})\bigl[\L^{-\frac{1}{2}}\bigr]
\]
that are \emph{virtual motives} in the sense of \cite{BBS}, i.e.~their Euler characteristic computes the virtual Euler characteristic $\widetilde\chi(\Quot_Y(F,n)) = \chi(\Quot_Y(F,n),\nu)\in \Z$ defined by means of Behrend's microlocal function \cite{Beh}.
We express the generating function
\be\label{Motivic_Partition}
\mathsf Z_r(Y,t) = \sum_{n\geq 0}\,\bigl[\Quot_Y(F,n)\bigr]_{\vir}\cdot t^n
\ee
in terms of the \emph{motivic exponential} (reviewed in Section \ref{sec:Motivic_Exp}). The next result (proven in Theorem \ref{Thm:Motivic_Partition_Function}) recovers the calculation \cite[Thm.~4.3]{BBS} by Behrend--Bryan--Szendr\H{o}i for the Hilbert scheme of points $\Hilb^nY$ if one sets $r=1$.

\begin{thm}\label{thmB}
The motivic partition function \eqref{Motivic_Partition} satisfies 
\[
\mathsf Z_r(Y,(-1)^rt)=\Exp\left(
\frac{(-1)^rt\left[Y \times \P^{r-1}\right]_{\vir}}{\bigl(1-(-\mathbb L^{-\frac{1}{2}})^rt\bigr)\bigl(1-(-\mathbb L^{\frac{1}{2}})^rt\bigr) }\right).
\]
\end{thm}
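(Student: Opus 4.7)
The plan is to reduce the calculation to a local computation on the model $\Quot_{\A^3}(\O^{\oplus r},n)$ and then globalize by means of the power structure on $\mathcal M_{\C}$, following the strategy of \cite{BBS} for the Hilbert scheme.

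First, I would use the critical structure established in Section \ref{Sec:Virtual_Motives}: the scheme $\Quot_{\A^3}(\O^{\oplus r},n)$ is realised as the critical locus of a regular function $f_{r,n}$ (essentially the potential $\Tr(A[B,C])$ on the smooth GIT quotient of the $r$-framed representation variety of the three-loop quiver), and the virtual motive is by definition the motivic vanishing cycle $[\phi_{f_{r,n}}]$. Since $f_{r,n}$ is homogeneous of positive weight under the diagonal scaling $\C^*$-action inherited from $\A^3$, the Bittner-type formula for vanishing cycles of torus-equivariant homogeneous functions reduces its computation to the $\C^*$-fixed locus, which is stratified combinatorially by $r$-tuples of $3$-dimensional partitions. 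The residual $\GL_r$-action rotating the framing vectors, after passing to the stable GIT quotient, is what produces the $[\P^{r-1}]$ factor.

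Second, assembling the stratawise contributions into the generating series $\mathsf Z_r(\A^3,t)$ should yield the explicit local identity
\[
\mathsf Z_r(\A^3,(-1)^rt)=\Exp\!\left(
\frac{(-1)^rt\,[\A^3\times\P^{r-1}]_{\vir}}{\bigl(1-(-\L^{-\frac{1}{2}})^rt\bigr)\bigl(1-(-\L^{\frac{1}{2}})^rt\bigr)}\right),
\]
specialising for $r=1$ to \cite[Thm.~4.3]{BBS}. I would then globalize: any point of an arbitrary pair $(Y,F)$ has a Zariski neighbourhood on which $F\isom\O^{\oplus r}$ and $Y$ is étale-locally $\A^3$, and the motivic vanishing cycle construction is compatible with étale base change, hence local on $Y$. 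The Gusein-Zade--Luengo--Melle-Hern\'andez power structure on $K_0(\Var_{\C})$ then propagates the local formula to arbitrary $Y$ via the identity $A(t)^{[Y]}=\Exp([Y]\cdot\log A(t))$, replacing $[\A^3\times\P^{r-1}]_{\vir}$ (equivalently $[\P^{r-1}]_{\vir}$ at the punctual level) by $[Y\times\P^{r-1}]_{\vir}$ in the numerator.

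The main obstacle will be the local equivariant calculation itself: one must carefully track the half-integer Lefschetz powers $(-\L^{\pm 1/2})^r$ and the sign $(-1)^r$ associated with the virtual dimension shift from the rank $r$, and recognise the resulting combinatorial series over $r$-tuples of plane partitions as precisely the motivic exponential displayed above. Once this local formula is in place, the globalization by power structure is essentially formal, given the étale-local nature of motivic vanishing cycles, exactly as in the Hilbert scheme case of \cite{BBS}.
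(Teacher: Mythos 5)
Your overall strategy --- an explicit computation on the local model $\Quot_{\A^3}(\O^{\oplus r},n)$ followed by globalization through the Gusein-Zade--Luengo--Melle-Hern\'andez power structure --- is the one the paper uses, and the local formula you write for $\mathsf Z_r(\A^3,(-1)^rt)$ is correct. But your globalization step contains a genuine gap. For a general pair $(Y,F)$ the Quot scheme is \emph{not} presented as a global critical locus, so there is no intrinsically defined motivic vanishing cycle class to glue; and the assertion that local motivic vanishing cycles patch together because the construction is ``compatible with \'etale base change'' is exactly the orientation-data problem, which is open (this is why the paper raises, as an open question in Section \ref{Sec:Virtual_Motives}, whether the classes constructed here agree with the canonical motives attached to an oriented d-critical structure, even for $r=1$). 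The paper avoids this entirely: $[\Quot_Y(F,n)]_{\vir}$ for arbitrary $(Y,F)$ is \emph{defined} by the power-structure identity $\sum_n[\Quot_Y(F,n)]_{\vir}((-1)^rt)^n=\mathsf P_r((-1)^rt)^{[Y]}$ (Definition \ref{Def:Virtual_Motive_Any_Threefold}), where $\mathsf P_r$ is the generating series of the punctual strata; the theorem then follows formally by substituting the closed form \eqref{Punctual_Signed} of $\mathsf P_r$ and using $[Y]_{\vir}\cdot[\P^{r-1}]_{\vir}=[Y\times\P^{r-1}]_{\vir}$. The real content is thus local, and consistency with the critical-locus virtual motive is only established for $Y=\A^3$ (Theorem \ref{Gen_Function_Affine}).

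Two further points you omit carry actual weight. First, extracting the punctual series $\mathsf P_r$ from $\mathsf Z_r(\A^3,t)$ is done by pushing the relative virtual motives forward along the Quot-to-Chow map to $\Sym(\A^3)$ and invoking an $\Exp_\cup$-stratification (Equation \eqref{Punctual_Exp}), together with a monodromy-freeness statement (Lemma \ref{Phi_No_Monodromy}); this is the precise form of your ``assembling stratawise contributions''. Second, the power structure formula \eqref{eqn:power_formula} and the removal of the `$\diamond$' decorations only apply to \emph{effective} series, so one must verify that the classes $\Omega_{r,n}$ (equivalently $(-1)^{rn}\mathsf P_{r,n}$) are effective; the paper does this via the explicit computation \eqref{Omega_Explicit}, giving $\Omega_{r,n}=(-\L^{\frac{1}{2}})^{-rn-2}[\P^{rn-1}]$. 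Without this check the identity $\mathsf P_r((-1)^rt)^{\L^3}_{\diamond}=\mathsf P_r((-1)^rt)^{\L^3}$, and hence the clean $\Exp$ formula in the statement, is not justified.
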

If $F$ is a simple, rigid vector bundle on a Calabi--Yau $3$-fold $Y$, the coefficients of the series \eqref{Motivic_Partition} refine the enumerative Donaldson--Thomas invariants encoded in \eqref{series:DT}.
Thus Theorem \ref{thmB} explicitly computes generating functions of higher rank \emph{motivic Donaldson--Thomas invariants}. As an example, consider a stable arithmetically Cohen--Macaulay rank $2$ bundle $F$ on a general quintic $Y\subset \P^4$ (cf.~Example \ref{Ex:ACM_Quintic}). Then $F$ is rigid, and Theorem \ref{thmB} yields (up to a sign) a refinement of the enumerative formula
\[
\DDT_F(q) = \mathsf M(q)^{-400}.
\]

\subsubsection*{Cohomological DT theory}
It is proven in \cite[Thm.~2.6]{BR18} that $\Quot_{\A^3}(\O^{\oplus r},n)$ is the critical locus of a regular function $f_{r,n}$ for all $r$ and $n$ (cf.~Section \ref{subsec:local_model}). We observe, using one of the main results of \cite{Davison16}, that the compactly supported \emph{vanishing cycle cohomology} 
\be\label{VC_quot}
\mathrm H_c\bigl(\Quot_{\A^3}(\O^{\oplus r},n),\Phi_{f_{r,n}}\bigr)
\ee
is \emph{pure}, and of Tate type, for all $n$. Moreover, in Section \ref{sec:VC_Cohomology} we compute, for fixed $r\geq 1$, the generating function of Hodge polynomials of \eqref{VC_quot}, cf.~Formula \eqref{VC_calculation}.

\subsection*{Related work in the rank $1$ case}
The enumerative theory of $\Hilb^nY$ has been solved in \cite{BFHilb,LEPA,JLi}. The first breakthrough in motivic Donaldson--Thomas theory was the definition and explicit calculation of the virtual motive of $\Hilb^nY$ on a $3$-fold \cite{BBS}. 

Concerning Hilbert schemes of subschemes $Z\subset Y$ with $\dim Z\leq 1$ in a projective $3$-fold $Y$, the contribution of a smooth curve $C\subset Y$, embedded with ideal sheaf $\mathscr I_C$, is encoded in the Quot scheme
\[
\Quot_Y(\mathscr I_C,n)\subset \Hilb_{\chi(\O_C) + n}(Y,[C]).
\]
The $C$-local enumerative Donaldson--Thomas theory was solved in \cite{LocalDT,Ricolfi2018}, whereas the motivic side was studied by Davison and the author in \cite{DavisonR}.

\subsubsection*{Conventions}
All schemes are locally of finite type over $\C$. For a scheme $X$, by $\DD(X) = \DD(\QCoh(X))$ we denote its derived category, and we let $(-)^\vee = \RRlHom(-,\O_X)$ be the derived dualising functor. For a torsion free sheaf $E$ on a variety $Y$ we denote by $\Ext^i(E,E)_0$ the kernel of the trace map $\tr^i\colon \Ext^i(E,E) \to H^i(Y,\O_Y)$, see \cite[Section 10.1]{modulisheaves} for its construction. A locally free sheaf (or vector bundle) $F$ on a variety is called \emph{simple} if $\Hom(F,F)=\C$, \emph{rigid} if $\Ext^1(F,F)=0$,
\emph{exceptional} if it is simple and $\Ext^i(F,F)=0$ for all $i>0$. A \emph{Calabi--Yau $3$-fold} is a smooth projective variety $Y$ of dimension $3$, such that $\omega_Y \cong \O_Y$ and $H^1(Y,\O_Y)=0$.

\subsection*{Acknowledgements}

The author wishes to thank Ben Davison, Barbara Fantechi and Martijn Kool for very helpful discussions. We owe a debt of gratitude to Dragos Oprea for generously sharing his insights on the problem. Many thanks to the anonymous referee for spotting several inaccuracies and helping to improve the text. Finally, thanks to SISSA for the great working conditions offered during the completion of this project.

\section{Preliminaries}\label{Sec:Virtual_Stuff}

In this section we set the main tools that will be used throughout the paper.

\subsection{Obstruction theories}
We refer the reader to \cite{BFinc,BFHilb} for more details on obstruction theories and virtual classes. Here we only recall the main definitions.

Let $X$ be a finite type $\C$-scheme, and let $L_X^{\bullet} \in \DD^{(-\infty,0]}(X)$ be Illusie's cotangent complex. 

\begin{definition}[{\cite[Def.~4.4]{BFinc} and \cite[Def.~1.10]{BFHilb}}]\label{def:Ob_Theory}
An \emph{obstruction theory} on $X$ is a morphism $\phi\colon \mathbb E \to L_X^{\bullet}$ in $\DD(X)$ such that $h^0(\phi)$ is an isomorphism and $h^{-1}(\phi)$ is surjective. If $\mathbb E$ is perfect of perfect amplitude contained in $[-1,0]$, we say that $\phi$ is \emph{perfect}. If there exists an isomorphism $\theta\colon \mathbb E\,\,\widetilde{\to}\,\, \mathbb E^\vee[1]$ such that $\theta^\vee[1] = \theta$, we say that $\phi$ is \emph{symmetric}. The \emph{virtual dimension} of a perfect obstruction theory is the integer $\vd = \rk \mathbb E$, i.e.~the difference $\rk E^0-\rk E^{-1}$ if $\mathbb E$ is locally written $[E^{-1} \to E^0]$.
\end{definition}

Throughout, we let
\[
\L_X = \tau_{\geq -1}L_X^{\bullet}  \in \DD^{[-1,0]}(X)
\]
be the cut-off at $-1$ of the full cotangent complex. We will only treat \emph{perfect} obstruction theories, which can be viewed as morphisms $\phi\colon \mathbb E\to \mathbb L_X$.
If $X$ is embeddable in a smooth scheme $U$ with ideal sheaf $I\subset \O_U$, then one has a canonical isomorphism
\[
\L_X = \bigl[I/I^2 \xrightarrow{\dd} \Omega_U\big|_X\bigr]
\]
where $\dd$ is the exterior derivative.

\subsection{Rings of motives and structures on them}
Most of the conventions recalled here are taken verbatim from \cite[Section 1]{DavisonR}. We will need this material (only) in Section \ref{Sec:Virtual_Motives}, so the reader not interested in the motivic part of the paper can safely skip the rest of this section.

Let $S$ be a variety over $\mathbb C$, and let $K_0(\Var_S)$ be the Grothendieck ring of $S$-varieties. The ring of \emph{motivic weights} over $S$ is the ring
\[
\mathcal M_S = K_0(\Var_{S})\bigl[\L^{-\frac{1}{2}}\bigr]
\]
obtained by formally inverting a square root of the Lefschetz motive $\L=[\A^1_S]$.

A morphism of schemes $f\colon S\to T$ induces, by fibre product, a map of rings $f^*\colon \mathcal M_T \to \mathcal M_S$, while composition with $f$ gives an $\mathcal M_T$-linear direct image homomorphism $f_!\colon \mathcal M_S \to \mathcal M_T$. If $f\colon S\to \Spec \C$ is the structure morphism of $S$, we write $\int_S$ instead of $f_!$ . If $S$ and $S'$ are two varieties, the exterior product 
\[
\mathcal M_S\times \mathcal M_{S'}\xrightarrow{\boxtimes}\mathcal M_{S\times S'}
\]
is defined on generators of $K_0(\Var)$ by sending $(u,v)\mapsto u\times v$ and then extended by linearity.

\begin{definition}
We denote by $S_0(\Var_S) \subset K_0(\Var_S)$ the sub semigroup of effective motives, i.e.~the semigroup generated by classes $[X\to S]$ of complex quasi-projective $S$-varieties modulo the scissor relations. Its image in $\mathcal M_S$ is the sub semigroup $\mathcal M_S^{\eff}\subset \mathcal M_S$ consisting of sums of elements of the form
\[
(-\mathbb{L}^{\frac{1}{2}})^n[X\rightarrow S],\quad n\in \Z.
\]
\end{definition}

\subsubsection{Equivariant theory}\label{sec:equivariant_Kgroups}

Recall that if $S$ is a variety with a good action by a finite group $G$ (i.e.~such that every point of $S$ has an affine $G$-invariant open neighborhood), the quotient $S/G$ exists as a variety.

\begin{definition}\label{Equivariant_K_Group}
Let $G$ be a finite group, $S$ a variety with good $G$-action. We denote by $\widetilde{K}_0^{G}(\Var_S)$ the abelian group 
generated by isomorphism classes $[X\to S]$ of $G$-equivariant $S$-varieties
with good action, modulo the $G$-equivariant scissor relations.
We define the $G$-\emph{equivariant Grothendieck group} $K_0^{G}(\Var_S)$ by imposing the further relations $[V\to X \to S]= [\A^r_X]$,
whenever $V\to X$ is a $G$-equivariant vector bundle of rank $r$, with $X\to S$ a $G$-equivariant $S$-variety.
The element $[\A^r_X]$ in the right hand side is taken with the $G$-action induced by the trivial action on $\A^r$ and the isomorphism $\A^r_X=\A^r\times X$.
\end{definition}

There is a natural ring structure on $\widetilde{K}_0^{G}(\Var_S)$ given by taking the diagonal action on $X\times_SY$, for two equivariant $S$-varieties $X\to S$ and $Y\to S$. Inverting a square root of $\L$, one obtains the rings $\widetilde{\mathcal M}_S^G$ and $\mathcal M_S^G$ of $G$-\emph{equivariant motivic weights}. These rings fit in a commutative diagram 
\be\label{map:quot1map}
\begin{tikzcd}
\widetilde{K}_0^{G}(\Var_S) \arrow{r}{\pi_G} \arrow{d} & K_0(\Var_{S/G})\arrow{d}\\
\widetilde{\mathcal M}_S^G\arrow{r}{\pi_G} & \mathcal M_{S/G}
\end{tikzcd}
\ee
where the top map $\pi_G$ is defined on generators by taking the orbit space,
\[
[X\to S] \mapsto [X/G \to S/G],
\]
and the bottom map is the extension determined by $\mathbb{L}^{\frac{n}{2}}\cdot [X\rightarrow S]\mapsto \mathbb{L}^{\frac{n}{2}}\cdot \pi_G[X\rightarrow S]$.
The map $\pi_G$ does not always extend to $\mathcal M_S^G$. It does if $G$ acts freely on $S$.

\smallbreak
A special case of Definition \ref{Equivariant_K_Group} yields the \emph{monodromic ring of motivic weights}
\[
\mathcal M_S^{\hat\mu},
\]
where $\hat\mu = \varprojlim \mu_n$ is the procyclic group of roots of unity. We have an Euler characteristic homomorphism 
\be\label{eulerchi}
\chi\colon \mathcal M_{\C}^{\hat\mu}\to \Z,\qquad \chi(\L^{-\frac{1}{2}})=-1.
\ee

\subsubsection{Lambda ring structures}\label{sec:lambda_rings}

Let $n>0$ be an integer, and let $\mathfrak{S}_n$ be the symmetric group of $n$ elements. By \cite[Lemma~1.6]{DavisonR}, namely the relative version of \cite[Lemma~2.4]{BBS}, there exist ``$n$-th power'' maps fitting in a commutative diagram
\be\label{powermap}
\begin{tikzcd}
K_0(\Var_S)\arrow{r}{(\,\cdot\,)^{\otimes n}}\arrow{d} & \widetilde{K}_0^{\,\mathfrak S_n}(\Var_{S^n})\arrow{d} \\
\mathcal M_S \arrow{r}{(\,\cdot\,)^{\otimes n}} & \widetilde{\mathcal M}_{S^n}^{\,\mathfrak S_n}
\end{tikzcd}
\ee
where $S^n = S\times \cdots\times S$ carries the natural $\mathfrak{S}_n$-action.
For $A\in\mathcal{M}_{S}$, define 
\[
\prsigma^n(A)=\pi_{\mathfrak{S}_n}(A^{\otimes n}) \in \mathcal M_{S^n/\mathfrak S_n}.
\]
The \textit{lambda ring} operations on $\mathcal M_{\C}$ are defined by $\sigma^n(A)=\prsigma^n(A) \in \mathcal M_{\C}$ for $A$ effective, and then taking the unique extension to a lambda ring on $\mathcal M_{\C}$, determined by the relation
\begin{equation}
\label{lambda_rel}
\sum_{i=0}^n\sigma^i([X]-[Y])\sigma^{n-i}[Y]=\sigma^n[X].
\end{equation}
Note that $\sigma^n(-\L^{1/2})=(-\L^{1/2})^n$. 

If $S$ comes with a commutative associative map $\nu\colon S\times S\to S$, we likewise define 
\[
\prsigma_{\nu}^n(A)=\nu_!\prsigma^n(A),
\]
where we abuse notation by denoting by $\nu$ the map $S^n/\mathfrak{S}_n\rightarrow S$.  As above, using the analogue of the relation \eqref{lambda_rel}, there is a unique set of lambda ring operators $\sigma_{\nu}^n$ agreeing with $\prsigma_{\nu}$ on effective motives. 

As a special case, we can consider $(S,\nu)=(\mathbb N,+)$, viewed as a symmetric monoid in the category of schemes. We obtain operations $\prsigma^n$ and $\sigma^n$ on $\mathcal{M}_{\C}\llbracket t\rrbracket$ via the isomorphism
\be\label{Iso_Power_series}
\mathcal{M}_{\C}\llbracket t\rrbracket \,\widetilde{\to}\,\mathcal{M}_{\mathbb{N}}.
\ee

\begin{remark}
The `$\diamond$' decoration will also appear in ``preliminary'' versions of the power structure on $K_0(\Var_{\C})$ (Section \ref{sec:Power_structures}) and of the motivic exponential on $\mathcal M_S$ (Section \ref{sec:Motivic_Exp}). Just as in \cite{DavisonR}, in our formulas from Section \ref{Sec:Virtual_Motives} we need to prove that we are dealing with effective classes before removing the `$\diamond$' decoration and pass to the classical operations.
\end{remark}

\subsubsection{Power structures}\label{sec:Power_structures}

\begin{definition}[{\cite{GLMps}}]\label{def:power_structure}
A \emph{power structure} on a ring $R$ is a map 
\begin{align*}
(1+tR\llbracket t\rrbracket)\times R&\to 1+tR\llbracket t\rrbracket\\
(A(t),m)&\mapsto A(t)^m
\end{align*}
satisfying the following conditions: 
\begin{enumerate}
    \item $A(t)^0=1$,
    \item $A(t)^1=A(t)$, 
    \item $(A(t)\cdot B(t))^m=A(t)^m\cdot B(t)^m$, 
    \item $A(t)^{m+m'}=A(t)^m\cdot A(t)^{m'}$, 
    \item $A(t)^{mm'}=(A(t)^m)^{m'}$, 
    \item $(1+t)^m=1+mt+O(t^2)$, 
    \item $A(t)^m\big{|}_{t\to t^e}=A(t^e)^m$.
\end{enumerate}  
\end{definition}

Throughout we use the following:
\begin{notation}
Partitions $\alpha\vdash n$ are written as $\alpha=(1^{\alpha_1}\cdots i^{\alpha_i}\cdots s^{\alpha_s})$, meaning that there are $\alpha_i$ parts of size $i$. In particular we recover $n = \sum_ii\alpha_i$.
The \emph{automorphism group} of $\alpha$ is the product of symmetric groups $G_\alpha=\prod_i\mathfrak S_{\alpha_i}$. 
\end{notation}

If $X$ is a variety and $A(t)=1+\sum_{n>0}A_nt^n \in K_0(\Var_\C)\llbracket t\rrbracket$ is a power series, we define
\be\label{eqn:power_formula}
(A(t))_{\pr}^{[X]}=1+\sum_{n\geq 0}\sum_{\alpha\vdash n}\pi_{G_\alpha}\Biggl(\Biggl[\prod_i X^{\alpha_i}\setminus \Delta\Biggr]\cdot \prod_i A_i^{\otimes \alpha_i}\Biggr)t^{n}.
\ee
In the above formula, $\Delta\subset \prod_i X^{\alpha_i}$  is the ``big diagonal'' (the locus in the product where at least two entries are equal), and the product in big round brackets 
is a $G_\alpha$-equivariant motive, thanks to the power map \eqref{powermap}.
Gusein-Zade, Luengo and Melle-Hern{\'a}ndez have proved \cite[Thm.~2]{GLMps} that there is a unique power structure 
\[
(A(t),m) \mapsto A(t)^{m}
\]
on $K_0(\Var_{\C})$ for which the restriction to the case where all $A_i$ and $m$ are effective is given by the formula \eqref{eqn:power_formula}.
Since we always consider effective exponents when taking powers, we just recall the recipe for dealing with general $A(t)$ and effective exponent $[X]$. First, note that for any such $A(t)$ there is an effective $B(t)$ such that $A(t)\cdot B(t)=C(t)$ is effective. Then we have
\[
A(t)^{[X]} = (C(t))^{[X]}_{\pr}\cdot ((B(t))^{[X]}_{\pr})^{-1},
\]
where both factors in the right hand side are defined via \eqref{eqn:power_formula}.

As noted in \cite{BBS}, there is an extension of the power structure to $\mathcal{M}_{\C}$ uniquely determined by the substitution rules 
\[
A((-\mathbb{L}^{\frac{1}{2}})^nt)^{[X]}=A(t)^{[X]}\big|_{t\mapsto(-\mathbb{L}^{\frac{1}{2}})^nt}.
\]

\subsubsection{Motivic Exponential}\label{sec:Motivic_Exp}
The \emph{plethystic}, or \emph{motivic exponential} is a group isomorphism
\[
\Exp\colon t\mathcal{M}_\C\llbracket t\rrbracket\,\widetilde{\to}\,1+t\mathcal{M}_\C\llbracket t\rrbracket,
\]
converting sums into products.
First, define $\prExp=\sum_{n\geq 0}\prsigma^n$, where $\prsigma^n$ are (up to the identification \eqref{Iso_Power_series}) the lambda ring operations relative to the monoid $(\mathbb{N},+)$.  Then if $A$, $B \in \mathcal M^{\eff}_{\mathbb N}$ are effective classes, define
\[
\Exp(A-B)=\prExp(A)\cdot \prExp(B)^{-1}.
\]

If $(S, \nu\colon S\times S\rightarrow S)$ is a commutative monoid in the category of schemes, with a submonoid $S_+ \subset S$ such that the induced map $\coprod_{n\geq 1}S_+^{\times n}\rightarrow S$ is of finite type, we similarly define
\[
\prExp_{\nu}(A)=\sum_{n\geq 0} \prsigma^n_\nu(A),
\]
and for $A$, $B \in \mathcal M^{\eff}_S$ two effective classes, we set
\[
\Exp_{\nu}(A-B)=\prExp_{\nu}(A)\cdot \prExp_{\nu}(B)^{-1}.
\]

\subsubsection{Motives over symmetric products}\label{sec:symm_products_cup}

The machinery described so far will be applied in Section \ref{Sec:Virtual_Motives} to the following situation. For a variety $V$, we will consider $(\Sym(V),\cup)$, where
\[
\Sym(V) = \coprod_{n\geq 0}\Sym^n(V)
\]
can be viewed as a monoid via the morphism
\[
\Sym(V)\times\Sym(V)\xrightarrow{\cup} \Sym(V)
\]
sending two $0$-cycles on $V$ to their union. We consider the submonoid $\coprod_{n>0}\Sym^n(V)$ to construct the maps $\prExp_\cup$ and $\Exp_\cup$ as in Section \ref{sec:Motivic_Exp}.

In order to recover a formal power series in $\mathcal M_{\C}\llbracket t \rrbracket$ from a relative motive over $\Sym(V)$, we consider the operation 
\be\label{Power_Series_Sym}
\#_!\left(\sum_{n\geq 0}\,\bigl[M_n\to \Sym^n(V)\bigr]\right)=\sum_{n\geq 0}\, [M_n]t^n.
\ee
In other words we take the direct image along the ``tautological'' map $\#\colon \Sym(V)\rightarrow\mathbb{N}$ which collapses $\Sym^n(V)$ onto the point $n$. In the right hand side of \eqref{Power_Series_Sym}, we use \eqref{Iso_Power_series} to identify relative motivic weights over $\mathbb N$ and formal power series with coefficients in $\mathcal M_{\C}$.

\subsubsection{The virtual motive of a critical locus}

For a complex scheme $X$ of finite type over $\C$, recall the \emph{virtual Euler characteristic}
\[
\widetilde\chi(X) = \chi(X,\nu_X) = \sum_{n\in \Z}n\cdot \chi(\nu_X^{-1}(n)),
\]
where $\nu_X\colon X(\C)\to \Z$ is Behrend's canonical constructible function \cite{Beh}.

\begin{definition}[\cite{BBS}]\label{def:VM}
Let $X$ be a scheme. A motivic class $\xi \in \mathcal M_{\C}^{\hat\mu}$ such that $\chi(\xi) = \widetilde\chi(X)$ is called a \emph{virtual motive} for $X$. Here $\chi$ is the map \eqref{eulerchi}.
\end{definition}

\begin{definition}
A scheme $X$ is a \emph{critical locus} if there exists a smooth scheme $U$ and a regular function $f\colon U\to \A^1$ such that $X = Z(\dd f) \subset U$.
\end{definition}

A critical locus $X = Z(\dd f) \subset U$ does not only carry a canonical \emph{virtual fundamental class} $[X]^{\vir}\in A_0X$ (cf.~\cite{BFHilb}). It also supports a canonical relative motive
\[
\MF_{U,f} = \L^{-\frac{\dim U}{2}}\cdot \left[-\phi_f\right]_X \in \mathcal M_{X}^{\hat\mu},
\]
where $\MF$ stands for ``Milnor fibre'' and $[\phi_f]_X \in \mathcal M_{X}^{\hat\mu}$ is the (relative) motivic vanishing cycle class introduced by Denef and Loeser \cite{DenefLoeser1}.
It can be seen as the virtual motivic analogue of the sheaf of vanishing cycles $\Phi_f \in \DD^b_c(X)$.

\begin{notation}\label{notation:vir}
If $X=Z(\dd f) \subset U$ is a critical locus, $i\colon Z\into X$ is a subscheme and $X\to Y$ is a morphism, we define
\[
[Z\to Y]_{\vir} = (Z\into X \to Y)_!i^\ast \MF_{U,f} \in \mathcal M_{Y}^{\hat\mu}.
\]
When $X \to Y=\Spec \C$ is the structure morphism, we simply write $[Z]_{\vir}$.
\end{notation}

Set $[\phi_f] = \int_X [\phi_f]_X$. Then, by \cite[Prop.~2.16]{BBS}, the motivic weight
\[
[X]_{\vir} = \int_X \MF_{U,f} = \L^{-\frac{\dim U}{2}}\cdot \bigl[-\phi_f\bigr] \,\in\, \mathcal M_{\C}^{\hat\mu}
\]
is a virtual motive for $X$, in the sense of Definition \ref{def:VM}.

\begin{example}\label{ex:smooth_VM}
When $f=0$, we have $X=U$ and $[\phi_f] = -[X]$, thus 
\[
[X]_{\vir} = \L^{-\frac{\dim X}{2}}[X] \in \mathcal M_{\C}.
\]
This is the motivic analogue of the relation
\[
[X]^{\vir} = e(\Omega_X)\cap [X] \in A_0X.
\]
Assume $X$ is proper. Applying $\chi$ (resp.~the degree map) to the first (resp.~the second) identity yields the virtual Euler characteristic $\widetilde\chi(X) = (-1)^{\dim X}\chi(X)$.
\end{example}

\begin{example}
More generally, if $X=Z(\dd f)$ is proper, Behrend's theorem \cite{Beh} reads
\[
\int_{[X]^{\vir}} 1 = \chi [X]_{\vir} \,\in\, \Z,
\]
expressing the relation between the virtual class of $X$ and its virtual motive.
\end{example}

\section{Obstruction theories on Quot schemes}
For a coherent sheaf $F$ on a variety $Y$, and an integer $n\geq 0$, the Quot scheme 
\[
\Quot_Y(F,n)
\]
parameterises short exact sequences
\be\label{ses19}
0 \to S \to F \to Q \to 0,
\ee
where $Q$ is a sheaf supported in dimension $0$ with
\[
\chi(Q) = n.
\]
Throughout this section, $Y$ denotes a smooth complex projective $3$-fold, and $F$  a locally free sheaf (or vector bundle) of rank $r\geq 1$.

\subsection{Tangents and obstructions}
For a $0$-dimensional sheaf $Q$, one has $\Hom(F,Q)=\C^{r\cdot \chi(Q)}$. 
All other Ext groups vanish:
\be\label{eqn:vanishing}
\Ext^{3-i}(Q,F)^*\cong\Ext^i(F,Q\otimes \omega_Y) \cong\Ext^i(F,Q)=0,\quad i>0. 
\ee
Given a short exact sequence as in \eqref{ses19}, these vanishings induce isomorphisms
\be\label{Vanishing_Ext_2_3}
\Ext^i(F,S)\,\widetilde{\to}\,\Ext^i(F,F),\quad i=2,3.
\ee

\begin{lemma}\label{Lemma:start}
Let $Y$ be a smooth projective $3$-fold, $F$ a vector bundle on $Y$, and $F\onto Q$ a $0$-dimensional quotient  with kernel $S$. Then:
\begin{itemize}
    \item [(i)] if $F$ is simple, one has $\Hom(S,F) = \C$,
    \item [(ii)] if $F$ is rigid, one has $\Ext^1(S,F) = 0$.
\end{itemize} 
\end{lemma}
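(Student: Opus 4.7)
The plan is to apply $\Hom(-,F)$ to the short exact sequence $0\to S\to F\to Q\to 0$ and read off both statements from the resulting long exact Ext sequence, using the vanishings of $\Ext^j(Q,F)$ in low degrees that follow from Serre duality.

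First, I would record the vanishing
\[
\Ext^j(Q,F)=0\qquad \text{for } j=0,1,2.
\]
By Serre duality on the smooth projective $3$-fold $Y$, $\Ext^j(Q,F)^\ast\cong \Ext^{3-j}(F,Q\otimes\omega_Y)$, and since $Q$ is $0$-dimensional, $\Ext^i(F,Q\otimes\omega_Y)\cong\Ext^i(F,Q)$, which vanishes for $i>0$ by the local-to-global spectral sequence (as $F$ is locally free and $Q$ is $0$-dimensional, so $\mathscr{E}xt^i(F,Q)=0$ for $i>0$ and $H^i(Y,Q\otimes F^\vee)=0$ for $i>0$). This is precisely \eqref{eqn:vanishing}, so I would just cite it.

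Next, apply $\Hom(-,F)$ to $0\to S\to F\to Q\to 0$ and look at the induced long exact sequence
\[
0\to\Hom(Q,F)\to\Hom(F,F)\to\Hom(S,F)\to\Ext^1(Q,F)\to\Ext^1(F,F)\to\Ext^1(S,F)\to\Ext^2(Q,F)\to\cdots
\]
Plugging in the three vanishings from the previous step yields canonical isomorphisms
\[
\Ext^i(F,F)\,\widetilde{\to}\,\Ext^i(S,F)\qquad\text{for } i=0,1.
\]
(This is the analogue for the first two Ext groups of the isomorphism \eqref{Vanishing_Ext_2_3}, which came from applying $\Hom(F,-)$ to the same sequence.) Part~(i) is then immediate from $\Hom(F,F)=\C$ when $F$ is simple, and part~(ii) is immediate from $\Ext^1(F,F)=0$ when $F$ is rigid.

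There is no real obstacle: once the low-degree vanishings $\Ext^{\leq 2}(Q,F)=0$ are in place, the proof is a one-line chase in the long exact sequence. The only point requiring a small amount of care is making sure Serre duality is applied in the correct direction, since the paper has already stated the ``dual'' isomorphism \eqref{Vanishing_Ext_2_3} for $\Ext^i(F,S)$ in degrees $i=2,3$, while here we need the statement for $\Ext^i(S,F)$ in degrees $i=0,1$.
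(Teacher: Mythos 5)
Your proposal is correct and matches the paper's argument: the paper likewise applies $\Hom(-,F)$ to $0\to S\to F\to Q\to 0$ and uses the vanishings $\Ext^{\leq 2}(Q,F)=0$ from \eqref{eqn:vanishing} to identify $\Hom(S,F)$ with $\Hom(F,F)$ and to sandwich $\Ext^1(S,F)$ between $\Ext^1(F,F)$ and $\Ext^2(Q,F)$. Your remark about applying Serre duality in the right direction is exactly the content of \eqref{eqn:vanishing} as stated in the paper, so nothing further is needed.
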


\begin{proof}
Applying $\Hom(-,F)$ to the exact sequence $S\into F\onto Q$ yields
\[
\Hom(Q,F) \to \Hom(F,F) \to \Hom(S,F) \to \Ext^1(Q,F),
\]
and the two outer groups vanish by \eqref{eqn:vanishing}, so if $F$ is simple we find
\be\label{Hom_S_into_F}
\C \,\widetilde{\to}\, \Hom(F,F) \,\widetilde{\to}\, \Hom(S,F),
\ee
proving (i).
The exact sequence above continues as
\[
\Ext^1(F,F) \to \Ext^1(S,F) \to \Ext^2(Q,F),
\]
where the rightmost group vanishes by \eqref{eqn:vanishing}, and the leftmost vanishes if $F$ is rigid (by definition), proving (ii).
\end{proof}

\begin{corollary}\label{IsoTg_InclusionOb}
In the situation of Lemma \ref{Lemma:start}, if $F$ is simple and rigid there is an isomorphism
\[
\Hom(S,Q) \,\widetilde{\to}\, \Ext^1(S,S),
\] 
and a linear inclusion
\[
\Ext^1(S,Q) \into \Ext^2(S,S).
\]
\end{corollary}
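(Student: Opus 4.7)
The natural strategy is to apply the functor $\Hom(S,-)$ to the short exact sequence
\[
0\to S \xrightarrow{\,i\,} F \xrightarrow{\,\pi\,} Q \to 0
\]
of \eqref{ses19} and read off the claims from the resulting long exact sequence. This mirrors the proof of Lemma \ref{Lemma:start}, which applied $\Hom(-,F)$ to the same sequence; the two hypotheses on $F$ (simplicity and rigidity) are exactly what will be needed to knock out the neighbouring terms.

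Concretely, the plan is to extract from the long exact sequence the six-term piece
\[
\Hom(S,F)\to \Hom(S,Q)\to \Ext^1(S,S)\to \Ext^1(S,F)\to \Ext^1(S,Q)\to \Ext^2(S,S).
\]
First I would observe that the map $\Hom(S,F)\to \Hom(S,Q)$ sends $\alpha\mapsto \pi\circ\alpha$, and by Lemma \ref{Lemma:start}(i) the one-dimensional space $\Hom(S,F)$ is generated by $i$ itself (this is how \eqref{Hom_S_into_F} was obtained, $\id_F\mapsto i$); since $\pi\circ i=0$, this map is identically zero. Next, Lemma \ref{Lemma:start}(ii) gives $\Ext^1(S,F)=0$. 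Plugging these two facts into the six-term sequence immediately yields exactness of
\[
0\to \Hom(S,Q)\to \Ext^1(S,S)\to 0,\qquad 0\to \Ext^1(S,Q)\to \Ext^2(S,S),
\]
which are exactly the isomorphism and injection claimed.

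There is no real obstacle here: the whole argument is a bookkeeping exercise once the two vanishings of Lemma \ref{Lemma:start} are in hand. The only point meriting a moment's care is checking that the connecting map $\Hom(S,F)\to\Hom(S,Q)$ vanishes identically (rather than just on a subspace), which follows from the explicit identification of its source as $\C\cdot i$ combined with $\pi\circ i=0$.
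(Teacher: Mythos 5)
Your proof is correct and takes essentially the same route as the paper: apply $\Hom(S,-)$ to the defining short exact sequence and use the two facts from Lemma \ref{Lemma:start} to kill the terms $\Ext^1(S,F)$ and the map $\Hom(S,F)\to\Hom(S,Q)$. The only (immaterial) difference is how that last map is shown to vanish: you identify the generator of $\Hom(S,F)$ as the inclusion $i$ and use $\pi\circ i=0$, whereas the paper deduces it from the surjectivity of the preceding map $\Hom(S,S)\to\Hom(S,F)$.
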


\begin{proof}
Applying $\Hom(S,-)$ to $S\into F\onto Q$ we obtain
\[
0\to \Hom(S,S) \xrightarrow{i} \Hom(S,F) \xrightarrow{u} \Hom(S,Q) \xrightarrow{\partial} \Ext^1(S,S) \to 0,
\]
where the $0$ on the right is $\Ext^1(S,F)=0$ obtained by Lemma \ref{Lemma:start}\,(ii). But we have a splitting $\Hom(S,S) = H^0(Y,\O_Y) \oplus \Hom(S,S)_0$, thus $i$ is an isomorphism since $\Hom(S,F)\cong \C$ by Lemma \ref{Lemma:start}\,(i). Hence $u=0$, which implies that $\partial$ is an isomorphism. Finally, the long exact sequence above continues as $0\to \Ext^1(S,Q) \to \Ext^2(S,S)$, proving the claim.
\end{proof}

Fix a short exact sequence 
as in \eqref{ses19},
defining a point $x = [F\onto Q] \in \Quot_Y(F)$. 
Consider the Quot functor $\mathsf{Quot}_Y(F)\colon \Sch_k^{\op}\to \Sets$ and let $\Art_{\C}\subset \Sch_{\C}^{\op}$ be the category of local Artinian $\C$-algebras (in other words $\Art_{\C}$ is the category of \emph{fat points}). As is well-known, the deformation functor 
\[
\Def_{x} = \Def_{F \onto Q}\subset \mathsf{Quot}_Y(F)\big|_{\Art_{\C}}
\]
defined by sending an algebra $A$ to the set of $A$-flat families of quotients restricting to $x$ over the closed fibre, is pro-representable and carries a tangent-obtruction theory $(T_1,T_2)$, in the sense of \cite{fga}, given by the vector spaces $T_i = \Ext^{i-1}(S,Q)$. However, this does not give rise to a perfect obstruction theory in the sense of Definition \ref{def:Ob_Theory}, for instance because higher Ext groups need not vanish. By Corollary \ref{IsoTg_InclusionOb}, the deformation theory of the quotients $F\onto Q$ is isomorphic to the deformation theory of the kernels $S\subset F$ --- see Lemma \ref{Prop:Def_Functors} for a precise statement. This allows us to modify the standard obstruction theory (essentially to get a larger obstruction space) by focusing on the \emph{kernels} of the surjections.

From now on in this section, we make the following:

\begin{assumption}\label{assumption193}
The locally free sheaf $F$ on the smooth projective $3$-fold $Y$ is simple and rigid. Moreover, either
\begin{itemize}
    \item [$(\star)$] $H^i(Y,\O_Y)=0$ for $i>0$ and $F$ is \emph{exceptional}, or
    \item [$(\ast)$] $Y$ is Calabi--Yau.
\end{itemize}
These are the assumptions of Theorem \ref{thmA}.
\end{assumption}

Recall that a simple coherent sheaf $F$ is exceptional if $\Ext^i(F,F)=0$ for all $i>0$. Note that, by our assumption, for any $S\in \Coh(Y)$ we have $\Ext^i(S,S)_0 = \Ext^i(S,S)$ for $i = 1,2$, and also for $i=3$ in case $(\star)$.

To get a \emph{perfect} obstruction theory, we will need the following vanishings.

\begin{prop}\label{Vanishing_Tracefree}
Let $(Y,F)$ satisfy Assumption \ref{assumption193}. Let $F\onto Q$ be a $0$-dimensional quotient with kernel $S$. Then
\[
\Hom(S,S)_0 = \Ext^3(S,S)_0 = 0.
\]
\end{prop}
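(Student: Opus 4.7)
The plan is to treat the two vanishings separately, by a short Ext-chase combined with Serre duality on the $3$-fold $Y$. For $\Hom(S,S)_0=0$ I would pick up exactly where the proof of Corollary \ref{IsoTg_InclusionOb} left off: by Lemma \ref{Lemma:start}(i), composition with the inclusion $j\colon S\hookrightarrow F$ induces an isomorphism $i\colon \Hom(S,S)\,\widetilde{\to}\,\Hom(S,F)=\C$ sending $\id_S$ to $j\neq 0$. Hence $\Hom(S,S)=\C\cdot\id_S$, and since $\tr^0(\id_S)=\rk S=r\neq 0$ the trace-free piece $\Hom(S,S)_0$ must be zero.

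For $\Ext^3(S,S)_0$ I would first use Serre duality on $Y$ to rewrite $\Ext^3(S,S)\cong\Hom(S,S\otimes\omega_Y)^*$. Tensoring the defining sequence \eqref{ses19} with the line bundle $\omega_Y$ preserves exactness, and applying $\Hom(S,-)$ produces an injection $\Hom(S,S\otimes\omega_Y)\hookrightarrow\Hom(S,F\otimes\omega_Y)$. A second Serre duality identifies $\Hom(S,F\otimes\omega_Y)$ with $\Ext^3(F,S)^*$, which in turn equals $\Ext^3(F,F)^*$ by the isomorphism \eqref{Vanishing_Ext_2_3}. The whole problem is thereby reduced to what we know about $\Ext^3(F,F)$.

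At this point Assumption \ref{assumption193} divides into cases, which is where the main subtlety lies. In case $(\star)$ the exceptionality of $F$ gives $\Ext^3(F,F)=0$, hence $\Ext^3(S,S)=0$; combined with $H^3(Y,\O_Y)=0$, the trace-free subspace $\Ext^3(S,S)_0$ coincides with the (vanishing) full group. In case $(\ast)$ the Calabi--Yau condition $\omega_Y\cong\O_Y$ instead yields $\Ext^3(F,F)=\Hom(F,F)^*=\C$ (by simplicity of $F$), and by the first part of the proof $\Ext^3(S,S)\cong\Hom(S,S)^*=\C$; since also $\dim H^3(Y,\O_Y)=1$, the trace decomposition $\Ext^3(S,S)=H^3(Y,\O_Y)\oplus\Ext^3(S,S)_0$ forces $\Ext^3(S,S)_0=0$ by a dimension count. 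Equivalently, $\tr^3$ is Serre-dual to the map $c\mapsto c\cdot\id_S$, which is already an isomorphism onto $\Hom(S,S)$ by the first part. The main obstacle is exactly this dichotomy: in the Calabi--Yau case one cannot hope for $\Ext^3(S,S)$ itself to vanish, and the trace-free summand dies only because the trace is an isomorphism rather than because the full group is zero.
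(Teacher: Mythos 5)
Your argument is correct and follows essentially the same route as the paper: the first vanishing comes from $\Hom(S,S)\cong\Hom(S,F)\cong\C$ together with the trace splitting, and the $\Ext^3$ vanishing is handled by Serre duality in the Calabi--Yau case and by reducing to $\Ext^3(F,F)=0$ via \eqref{Vanishing_Ext_2_3} in case $(\star)$. The only cosmetic difference is that in case $(\star)$ the paper uses the surjection $\Ext^3(F,S)\onto\Ext^3(S,S)$ directly, whereas you pass to its Serre dual, the injection $\Hom(S,S\otimes\omega_Y)\into\Hom(S,F\otimes\omega_Y)$.
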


\begin{proof}
From the splitting $\Hom(S,S) = H^0(Y,\O_Y)\oplus \Hom(S,S)_0$ induced by the trace, and the isomorphisms $\Hom(S,S) \isom \Hom(S,F) \isom \C$, we deduce $\Hom(S,S)_0 = 0$.
In the Calabi--Yau case $(\ast)$, by Serre duality we obtain the vanishing $\Ext^3(S,S)_0=0$. In case $(\star)$, consider the surjection $\Ext^3(F,S)\onto \Ext^3(S,S)$. By \eqref{Vanishing_Ext_2_3} we have $\Ext^3(F,S) = \Ext^3(F,F) = 0$, so $\Ext^3(S,S)=0$.
\end{proof}


\subsubsection{Virtual dimension and point-wise symmetry}\label{rmk:Dimension_Zero} 

In the perfect obstruction theory we want to build, the tangent space at $x=[S\into F\onto Q]\in \Quot_Y(F)$ is $\Ext^1(S,S) = \Hom(S,Q)$, and the obstruction space is $\Ext^2(S,S)$. Its virtual dimension at $x$ would then be
\be\label{virdim}
\vd_x = \ext^1(S,S) - \ext^2(S,S) = 1 - \chi(S,S) - \ext^3(S,S).
\ee
Note that $\chi(S,S) = \chi(F,F)$. In case $(\star)$, we have $\ext^3(S,S)=0$ and $\chi(F,F)=1$, therefore $\vd_x=0$. In the Calabi--Yau case, $\vd_x=0$ by Serre duality --- or, directly, because $\ext^3(S,S)=1$ and $\chi(F,F)=0$.
So the difference \eqref{virdim} is always zero. 

In fact, more is true: tangents are always dual to obstructions. This is clear in the Calabi--Yau case. In case $(\star)$, since $F$ is exceptional, one can use \emph{both} the vanishings $\Ext^2(F,S)=\Ext^3(F,S)=0$ from \eqref{Vanishing_Ext_2_3} to obtain an exact sequence
\[
0 \to \Ext^2(S,S) \to \Ext^3(Q,S) \to 0.
\]
Dualising, this is an isomorphism
\[
\Hom(S,Q) \,\widetilde{\to}\, \Ext^2(S,S)^*.
\]
To sum up, if we manage to produce a perfect obstruction theory with $\Ext^1(S,S)$, $\Ext^2(S,S)$ as tangents and obstructions, it will be $0$-dimensional and ``point-wise symmetric''. However, point-wise symmetry does not imply global symmetry (cf.~Definition \ref{def:Ob_Theory}), as shown by the case of $\Hilb^nY$ for a $3$-fold $Y$ that is not Calabi--Yau.

\subsection{Obstruction theory: construction}\label{Sec:Construct_Ob_Theory}
Let us shorten $\mathrm{Q}=\Quot_Y(F,n)$. Let $p\colon Y\times \mathrm{Q}\to \mathrm{Q}$ and $q\colon Y\times \mathrm{Q}\to Y$ be the projections. Consider the universal exact sequence
\[
0\to \SS\to q^*F \to \mathcal Q\to 0
\]
living over $Y\times \mathrm{Q}$.
The trace map 
\[
\tr_{\SS}\colon \RRlHom(\SS,\SS) \to \O_{Y\times \mathrm{Q}}
\]
has a canonical splitting, and we denote its kernel by
\[
\RRlHom(\SS,\SS)_0.
\]
The truncated cotangent complex $\mathbb{L}_{Y\times \mathrm{Q}}$ splits as $p^*\mathbb{L}_\mathrm{Q}\oplus q^*\mathbb{L}_Y$, so the \emph{truncated Atiyah class} (cf.~\cite[Def.~2.6]{HT}) 
\[
A(\SS) \in \Ext^1(\SS,\SS\otimes \mathbb{L}_{Y\times \mathrm{Q}})
\]
projects onto the factor
\begin{align*}
    \Ext^1(\SS,\SS\otimes p^*\mathbb{L}_\mathrm{Q}) &= \Ext^1(\SS^\vee\overset{\mathbf L}{\otimes}\SS,p^*\mathbb{L}_\mathrm{Q})\\
    &=\Ext^1(\mathbf{R}\lHom(\SS,\SS),p^*\mathbb{L}_\mathrm{Q}),
\end{align*}
which by the splitting of $\tr_{\SS}$ can be further projected onto
\[
\Ext^1(\mathbf{R}\lHom(\SS,\SS)_0,p^*\mathbb{L}_\mathrm{Q}).
\]
By Grothendieck duality along the smooth, proper $3$-dimensional morphism $p$, one has
\be\label{Grothendieck_Duality}
\RR p_* \RRlHom(\mathscr F,p^*\mathscr G\otimes \omega_p[3])=\RRlHom(\RR p_*\mathscr F,\mathscr G)
\ee
for $\mathscr F\in \DD^b(Y\times \mathrm{Q})$ and $\mathscr G\in \DD^b(\mathrm{Q})$,
where $\omega_p=q^*\omega_Y$ is the relative dualising sheaf.
Setting $\mathscr F = \RRlHom(\SS,\SS)_0\otimes \omega_p$ and $\mathscr G = \mathbb{L}_\mathrm{Q}$ in \eqref{Grothendieck_Duality}, we obtain
\[
    \RR p_* \RRlHom(\RRlHom(\SS,\SS)_0\otimes \omega_p,p^*\mathbb{L}_\mathrm{Q}\otimes \omega_p[3]) 
    =\RRlHom(\RR p_* (\RRlHom(\SS,\SS)_0\otimes \omega_p),\mathbb{L}_\mathrm{Q}),
\]
which after applying $h^{-2}\circ \RR\Gamma$ becomes
\begin{align*}
\Ext^1(\RRlHom(\SS,\SS)_0,p^*\mathbb{L}_\mathrm{Q}) &= \Ext^{-2}(\RR p_* (\RRlHom(\SS,\SS)_0\otimes \omega_p),\mathbb{L}_\mathrm{Q}) \\
&=\Hom(\mathbb E,\mathbb{L}_\mathrm{Q}),
\end{align*}
where we have set
\[
\mathbb E=\RR p_* (\RRlHom(\SS,\SS)_0\otimes \omega_p)[2].
\]
Under the above identifications, the truncated Atiyah class $A(\SS)$ determines a morphism
\[
\phi\colon \mathbb E\to \mathbb{L}_\mathrm{Q}.
\]
We can now give the proof of Theorem \ref{thmA}.

\begin{theorem}\label{THM:Ob_Theory}
If the pair $(Y,F)$ satisfies Assumption \ref{assumption193}, then $\phi$ is a perfect obstruction theory of virtual dimension $0$. If $Y$ is Calabi--Yau, it is symmetric.
\end{theorem}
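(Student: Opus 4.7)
The plan is to verify the three defining properties of a perfect obstruction theory (perfectness of $\mathbb E$ in amplitude $[-1,0]$, isomorphism on $h^0$, surjectivity on $h^{-1}$), then compute the virtual dimension, and finally handle symmetry in the Calabi--Yau case. Throughout, the guiding principle is that by Corollary \ref{IsoTg_InclusionOb} and Proposition \ref{Vanishing_Tracefree}, the deformation theory of the quotients $F\onto Q$ agrees with that of the \emph{kernels} $S\subset F$, which are simple (Lemma \ref{Lemma:start}), and the construction of $\phi$ mimics the Huybrechts--Thomas Atiyah-class obstruction theory for moduli of simple complexes \cite{HT}.

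\textbf{Perfectness and amplitude of $\mathbb E$.} First I compute the fibres of $\mathbb E$ over a closed point $x=[S\into F\onto Q]\in \mathrm Q$. By cohomology and base change, $\mathbb E|_x=\RR\Gamma(Y,\RRlHom(S,S)_0\otimes \omega_Y)[2]$. Serre duality on the smooth projective $3$-fold $Y$ identifies this with $\RR\Hom(S,S)_0^\vee[-1]$, so that $h^i(\mathbb E|_x)\cong \Ext^{1-i}(S,S)_0^*$. Proposition \ref{Vanishing_Tracefree} kills the groups in degrees $i=1$ and $i=-2$, while the traceless conditions in degrees $1,2$ are automatic because $H^i(Y,\O_Y)=0$ for $i=1,2$ in both cases $(\star)$ and $(\ast)$. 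Thus $\mathbb E$ has cohomology concentrated in $[-1,0]$. Perfectness then follows because $\RRlHom(\SS,\SS)_0\otimes\omega_p$ is perfect on $Y\times \mathrm Q$ and $p$ is smooth proper of relative dimension $3$.

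\textbf{Obstruction theory property.} Having identified the fibres, I verify that $h^0(\phi)$ is an isomorphism and $h^{-1}(\phi)$ is injective dually, i.e.\ that $\phi$ controls infinitesimal deformations and obstructions. The natural tangent-obstruction theory of $\mathsf{Quot}_Y(F)$ (in the sense of \cite{fga}) has tangent $\Hom(S,Q)$ and obstruction space $\Ext^1(S,Q)$; via Corollary \ref{IsoTg_InclusionOb} these identify with $\Ext^1(S,S)$ and a subspace of $\Ext^2(S,S)$, which under Assumption \ref{assumption193} coincide with their traceless variants. Using the standard argument of \cite[Thm.~4.1]{HT} (or rather its adaptation), the truncated Atiyah class $A(\SS)$ projected to the traceless summand induces precisely the classifying map of this tangent-obstruction theory, so $h^0(\phi)$ is an isomorphism and $h^{-1}(\phi)$ is surjective. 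This is the step I expect to be the most delicate: one has to carefully compare the semi-regularity map coming from the Atiyah class with the connecting homomorphism $\partial\colon \Hom(S,Q)\widetilde{\to}\Ext^1(S,S)$ from Corollary \ref{IsoTg_InclusionOb}, which amounts to unwinding the identifications made via Grothendieck duality.

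\textbf{Virtual dimension.} Pointwise, $\rk \mathbb E|_x=\ext^1(S,S)_0-\ext^2(S,S)_0$, which equals $0$ by the Euler-characteristic argument already carried out in Section \ref{rmk:Dimension_Zero} (using $\chi(F,F)=1$ and $\ext^3(S,S)_0=0$ in case $(\star)$, and Serre duality in case $(\ast)$).

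\textbf{Symmetry in the Calabi--Yau case.} When $\omega_Y\cong \O_Y$, we have $\omega_p\cong \O_{Y\times \mathrm Q}$, hence $\mathbb E=\RR p_*\RRlHom(\SS,\SS)_0[2]$. Grothendieck duality along $p$ gives
\[
\mathbb E^\vee = \RR p_*\bigl(\RRlHom(\SS,\SS)_0^\vee\otimes \omega_p[3]\bigr)[-2]=\RR p_*\RRlHom(\SS,\SS)_0[1],
\]
using the self-duality $\RRlHom(\SS,\SS)_0^\vee\cong \RRlHom(\SS,\SS)_0$ induced by the trace pairing. Hence $\mathbb E^\vee[1]\cong \mathbb E$, and symmetry of the isomorphism $\theta$ (i.e.\ $\theta^\vee[1]=\theta$) follows from the symmetry of the trace pairing, as in \cite[Thm.~3.4]{HT}. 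This concludes the proof.
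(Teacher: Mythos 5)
Your proposal is correct and follows essentially the same route as the paper: the obstruction-theory property is delegated to the Huybrechts--Thomas Atiyah-class argument applied to the family of simple kernels $\SS$, perfect amplitude $[-1,0]$ comes from the vanishing of $\Ext^i(S,S)_0$ for $i\neq 1,2$ (Proposition \ref{Vanishing_Tracefree}) combined with Grothendieck duality, the virtual dimension is the pointwise Euler-characteristic computation of Section \ref{rmk:Dimension_Zero}, and symmetry in the Calabi--Yau case follows from trivialising $\omega_p$ and the self-duality of $\RRlHom(\SS,\SS)_0$. The only cosmetic differences are that the paper establishes the amplitude by exhibiting $\RR p_*\RRlHom(\SS,\SS)_0$ as a two-term complex of bundles in degrees $1,2$ (rather than your fibrewise computation plus base change, which amounts to the same thing) and cites \cite[Lemma~1.23]{BFHilb} rather than \cite{HT} for $\theta^\vee[1]=\theta$.
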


\begin{proof}
The Quot scheme $\mathrm Q$ satisfies the assumptions stated in \cite[Section 4]{HT}, namely it is separated and it carries a universal simple sheaf. The latter is just the universal kernel $\SS \in \Coh(Y\times \mathrm Q)$ viewed as a $\mathrm Q$-flat family of simple sheaves on $Y$. Now the argument of \cite[Thm.~4.1]{HT} applied to $\SS$ proves that $\phi$ is an obstruction theory.

Let us shorten $\mathbb H = \RRlHom(\SS,\SS)_0$. Note that $\mathbb H$ is canonically self-dual.
The complex $\RR p_* \mathbb H$ is isomorphic in the derived category to a two-term complex of vector bundles $\mathcal T^\bullet = [\mathcal T^1\to \mathcal T^2]$ concentrated in degrees $1$ and $2$. More precisely, as in \cite[Lemma.~4.2]{HT}, the identification $\RR p_* \mathbb H = \mathcal T^\bullet$ follows from the vanishings
\[
\Ext^i(S,S)_0 = 0, \quad i\neq 1,2,
\]
that we proved in Proposition \ref{Vanishing_Tracefree}.
On the other hand, we have
\begin{align*}
(\RR p_* \mathbb H)^\vee[-1] &\,\,=\,\, \RRlHom(\RR p_* \mathbb H,\O_{\mathrm Q})[-1] \\
&\,\,=\,\, \RR p_\ast \RRlHom(\mathbb H,\omega_p[3])[-1] & \small{\textrm{Grothendieck duality}} \\
&\,\,=\,\, \RR p_\ast \RRlHom(\mathbb H,\omega_p)[2] & \small{\textrm{shift}} \\
&\,\,=\,\, \RR p_\ast \RRlHom(\mathbb H^\vee,\omega_p)[2] & \small{\mathbb H = \mathbb H^\vee} \\
&\,\,=\,\, \RR p_\ast (\mathbb H\otimes \omega_p)[2] \\
&\,\,=\,\, \mathbb E.
\end{align*}
Therefore $\mathbb E$ is perfect in $[-1,0]$, i.e.~$\phi$ is perfect.

For any point $x = [S\into F\onto Q]$, with inclusion $\iota_x\colon x \into \mathrm Q$, one has
\[
h^{i-1}(\mathbf L \iota_x^\ast \mathbb E^\vee) = \Ext^i(S,S),\quad i=1,2.
\]
Therefore we have $\vd = \rk \mathbb E= \ext^1(S,S) - \ext^2(S,S) = 0$, as observed in Section \ref{rmk:Dimension_Zero}. 

Let us prove symmetry in the Calabi--Yau case. The argument is standard --- see for instance \cite{BFHilb} --- but we repeat it here for completeness. Any trivialisation $\omega_Y \,\widetilde{\to}\,\O_Y$ induces, by pullback along $Y\times \mathrm Q \to Y$, a trivialisation $\omega_p \,\widetilde{\to}\,\O_{Y\times \mathrm Q}$, that we can use to construct an isomorphism
\[
\mathbb E[-2] \,\widetilde{\to}\,\RR p_* \mathbb H.
\]
 Dualising and shifting the last isomorphism, we get
\[
\theta\colon (\RR p_* \mathbb H)^\vee[-1]\,\widetilde{\to}\,\mathbb E^\vee[1],
\]
where the source is canonically identified with $\mathbb E$.
The symmetry condition $\theta^\vee[1] = \theta$ follows from \cite[Lemma 1.23]{BFHilb}.
\end{proof}

\begin{corollary}\label{Cor:Virtual_Class}
Under the assumptions of Theorem \ref{THM:Ob_Theory}, the Quot scheme $\Quot_Y(F,n)$ has a $0$-dimensional virtual fundamental class
\[
\bigl[\Quot_Y(F,n)\bigr]^{\vir} \in A_0(\Quot_Y(F,n)).
\]
\end{corollary}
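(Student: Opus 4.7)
The plan is to invoke the general Behrend--Fantechi machinery \cite{BFinc}: any perfect obstruction theory $\phi\colon\mathbb E\to\mathbb L_X$ on a finite type $\C$-scheme $X$ canonically determines a virtual fundamental class $[X]^{\vir}\in A_{\vd}(X)$, where $\vd = \rk \mathbb E$ is the virtual dimension. So this corollary is essentially a direct appeal to that construction, once we verify its input hypotheses are met.

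First, by Theorem \ref{THM:Ob_Theory}, under Assumption \ref{assumption193} the morphism $\phi\colon \mathbb E\to \mathbb L_{\mathrm Q}$ constructed in Section \ref{Sec:Construct_Ob_Theory} is a perfect obstruction theory on $\mathrm Q = \Quot_Y(F,n)$, with $\mathbb E$ of perfect amplitude in $[-1,0]$ and of virtual dimension $\vd = 0$. The Quot scheme $\Quot_Y(F,n)$ is projective, hence of finite type over $\C$, which supplies the remaining hypothesis needed for the Behrend--Fantechi construction.

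Then the Behrend--Fantechi construction associates to $(\mathrm Q,\phi)$ an intrinsic normal cone $\mathfrak C_{\mathrm Q}$ embedded in the vector bundle stack $h^1/h^0(\mathbb E^\vee)$, and the virtual class is obtained by intersecting $\mathfrak C_{\mathrm Q}$ with the zero section of this stack. Because $\vd = \rk \mathbb E = 0$, the resulting cycle class lives in $A_0(\mathrm Q)$, giving
\[
[\Quot_Y(F,n)]^{\vir} \in A_0(\Quot_Y(F,n)),
\]
as asserted.

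There is no real obstacle here: the only non-trivial content has already been carried out in Theorem \ref{THM:Ob_Theory} (existence, perfectness and dimension of the obstruction theory). The corollary is just a translation of that statement into the language of virtual cycles via \cite{BFinc}.
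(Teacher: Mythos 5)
Your proposal is correct and is exactly the argument the paper intends: the corollary is a direct application of the Behrend--Fantechi construction to the $0$-dimensional perfect obstruction theory established in Theorem \ref{THM:Ob_Theory}. Nothing further is needed.
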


Since the Quot scheme is proper, we can define Donaldson--Thomas type invariants
\be\label{def:dt_quot}
\DDT_{F,n} = \int_{[\Quot_Y(F,n)]^{\vir}}1 \in \Z,
\ee
representing the virtual number of points of the Quot scheme. They will be discussed in Section \ref{sec:Higher_Rk_DT}.

\subsection{Relation with moduli of simple sheaves}
In the proof of Theorem \ref{THM:Ob_Theory} we viewed the scheme $\Quot_Y(F,n)$ as a fine moduli space of simple sheaves via the universal kernel $\SS\subset q^*F$. We now prove that $\Quot_Y(F,n)$ is indeed an open subscheme of the moduli space 
\[
M_{Y,n}
\]
of simple sheaves with Chern character
$v_n = \ch(F)-(0,0,0,n)$.

\smallbreak
We now recall a classical result from Deformation Theory, stated in the language of tangent-obstruction theories --- see e.g.~\cite[Ch.~6]{fga}.

\begin{lemma}\label{Prop:Def_Functors}
Let $\DD$, $\DD'$ be two pro-representable deformation functors and let $(T_1,T_2)$, $(T'_1,T'_2)$ be tangent-obstruction theories on them. Let $\eta\colon \DD\to \DD'$ be a morphism inducing an isomorphism $h\colon T_1 \,\widetilde{\to}\,T_1'$ and a linear embedding $T_2\into T_2'$. Then $\eta$ is an isomorphism.
\end{lemma}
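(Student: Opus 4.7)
The plan is to show that $\eta(A)\colon \DD(A)\to \DD'(A)$ is a bijection for every $A\in \Art_{\C}$, by induction on $\dim_{\C} A$; since both functors are pro-representable, this is equivalent to $\eta$ being an isomorphism. The base case $A=\C$ holds because a deformation functor takes the residue field to a singleton. For the inductive step, factor any surjection in $\Art_{\C}$ through a small extension $\pi\colon A'\onto A$ whose kernel is a finite-dimensional $\C$-vector space $M$, and assume bijectivity of $\eta(A)$.

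For surjectivity of $\eta(A')$: given $\xi'\in\DD'(A')$, the inductive hypothesis yields a unique $\bar\xi\in\DD(A)$ with $\eta(\bar\xi)=\xi'|_A$. The obstruction $\omega(\bar\xi)\in T_2\otimes M$ to lifting $\bar\xi$ along $\pi$ is sent, by naturality of the tangent-obstruction structures with respect to $\eta$, to the obstruction $\omega(\xi'|_A)\in T_2'\otimes M$, which vanishes because $\xi'$ is already a lift. Since $T_2\into T_2'$ remains injective after tensoring with $M$, we conclude $\omega(\bar\xi)=0$ and pick some lift $\xi\in\DD(A')$ of $\bar\xi$. Then $\eta(\xi)$ and $\xi'$ are two lifts of $\xi'|_A$ to $\DD'(A')$, and hence differ by a unique element $\delta'\in T_1'\otimes M$; translating $\xi$ by $(h\otimes\id_M)^{-1}(\delta')$ produces a lift whose image under $\eta$ equals $\xi'$.

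For injectivity: if $\xi_1,\xi_2\in\DD(A')$ satisfy $\eta(\xi_1)=\eta(\xi_2)$, then restricting to $A$ and using the inductive hypothesis yields $\xi_1|_A=\xi_2|_A$, so the $\xi_i$ differ by a unique $\delta\in T_1\otimes M$. Applying $\eta$ and using compatibility with the $T_1$-torsor structure gives $h(\delta)=0$ in $T_1'\otimes M$, and the bijectivity of $h$ forces $\delta=0$, i.e.~$\xi_1=\xi_2$.

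The only step that is not purely formal book-keeping is the \emph{naturality} invoked twice above: $\eta$ must respect both the obstruction map to $T_2\otimes M$ and the $T_1$-torsor structure on the set of lifts attached to a small extension. In the treatment of \cite{fga} this compatibility is part of what it means for $\eta$ to be a morphism of deformation functors equipped with tangent-obstruction theories, so it is automatic; the main obstacle is simply keeping this naturality book-keeping straight through the induction.
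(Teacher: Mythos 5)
Your argument is correct, and it is essentially the standard proof that the paper delegates to its reference: the paper's ``proof'' is just the citation of \cite[Remark 2.3.8]{Sernesi}, and the induction over small extensions, using injectivity of $T_2\otimes M\to T_2'\otimes M$ to kill the obstruction and the isomorphism $h\otimes\id_M$ to adjust and compare lifts, is precisely the argument behind that remark. The one place worth flagging explicitly is your repeated use of the word \emph{unique} for the element of $T_1\otimes M$ (resp.\ $T_1'\otimes M$) relating two lifts: transitivity of the torsor action is part of any tangent-obstruction theory, but \emph{freeness} of the action --- which you need in the injectivity step to deduce $h(\delta)=0$ from $h(\delta)$ acting trivially --- is exactly where the pro-representability hypothesis on $\DD$ and $\DD'$ enters, so it deserves a sentence. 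Likewise, the naturality of the obstruction maps with respect to $\eta$ that you flag at the end is indeed built into the notion of a morphism compatible with tangent-obstruction theories in \cite{fga} and \cite{Sernesi}, so no gap remains there.
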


\begin{proof}
See \cite[Remark 2.3.8]{Sernesi} and the surrounding discussion.
\end{proof}

\begin{prop}\label{prop:Open_Immersion}
Let $F$ be a simple rigid vector bundle on a smooth projective $3$-fold $Y$. Then there is an open immersion $\Psi_n\colon \Quot_Y(F,n) \to M_{Y,n}$.
\end{prop}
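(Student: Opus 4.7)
The plan is to construct $\Psi_n$ via the universal kernel and then verify it is simultaneously étale and injective on $\C$-points; such a map of locally finite type algebraic spaces over $\C$ is automatically an open immersion. The moduli space $M_{Y,n}$ of simple sheaves with Chern character $v_n$ exists as an algebraic space locally of finite type by the classical work of Altman--Kleiman and Inaba. To define $\Psi_n$, I would exhibit the universal kernel $\SS \subset q^\ast F$ on $Y \times \Quot_Y(F,n)$ as a flat family of simple sheaves. Flatness over $\Quot_Y(F,n)$ is automatic (as kernel of a surjection of flat sheaves), and each fibre $S$ is simple because Lemma \ref{Lemma:start}(i) gives $\Hom(S,F) = \C$, and the isomorphism $\Hom(S,S) \,\widetilde{\to}\, \Hom(S,F)$ already used inside the proof of Corollary \ref{IsoTg_InclusionOb} transports this to $\Hom(S,S) = \C$.

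For injectivity on $\C$-points, suppose $x_i = [S_i \into F \onto Q_i]$, $i=1,2$, have the same image in $M_{Y,n}$, and fix an isomorphism $\varphi \colon S_1 \,\widetilde{\to}\, S_2$. The composition $S_1 \xrightarrow{\varphi} S_2 \into F$ lies in the one-dimensional space $\Hom(S_1, F)$ (Lemma \ref{Lemma:start}(i)), which is spanned by the canonical inclusion; as $\varphi$ is an isomorphism the scalar is nonzero, so $S_1$ and $S_2$ agree as subsheaves of $F$. Hence $x_1 = x_2$ in $\Quot_Y(F,n)$.

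For étaleness I would invoke Lemma \ref{Prop:Def_Functors}. At a closed point $x = [S \into F \onto Q]$ the morphism $\Psi_n$ induces a map $\eta \colon \Def_x \to \Def_{[S]}$ of pro-representable deformation functors; the source carries the Quot tangent-obstruction theory $(\Hom(S,Q), \Ext^1(S,Q))$, while the target carries $(\Ext^1(S,S), \Ext^2(S,S))$. Corollary \ref{IsoTg_InclusionOb} supplies exactly the two ingredients needed by Lemma \ref{Prop:Def_Functors}: an isomorphism on tangents and a linear embedding on obstructions. Therefore $\eta$ is an isomorphism of deformation functors, i.e.~$\Psi_n$ is formally étale at every closed point. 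Together with finite type this upgrades to étaleness, and an étale radicial morphism is an open immersion (radiciality here being just injectivity on $\C$-points, since all residue fields are $\C$).

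The only delicate point is ensuring the correct comparison of tangents and obstructions for the two moduli problems \emph{without} invoking the Calabi--Yau or exceptional hypothesis used in Section \ref{Sec:Construct_Ob_Theory}; that role is played entirely by Corollary \ref{IsoTg_InclusionOb}, which depends only on simplicity and rigidity of $F$, precisely matching the hypotheses of the statement.
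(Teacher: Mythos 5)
Your proposal is correct and follows essentially the same route as the paper: define $\Psi_n$ via the universal kernel, check injectivity on points, and deduce formal \'etaleness from Lemma \ref{Prop:Def_Functors} applied to the comparison of tangent-obstruction theories supplied by Corollary \ref{IsoTg_InclusionOb}. Your injectivity argument is in fact slightly more careful than the paper's (which simply cites ``the definition of the Quot functor''): using $\Hom(S_1,F)=\C$ to show that an abstract isomorphism of kernels forces equality as subsheaves of $F$ is exactly the point that needs checking, since the target $M_{Y,n}$ remembers only the isomorphism class of $S$.
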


\begin{proof}
The map $\Psi_n$ takes a surjection to its kernel. This is clearly a morphism, since $\SS$ is flat over $\mathrm Q=\Quot_Y(F,n)$. It is injective on points (by definition of the Quot functor) and locally of finite type (because the Quot scheme is of finite type over $\C$). 

We now show that $\Psi_n$ is formally \'etale. Fix a point $x = [F\onto Q]\in \Quot_Y(F,n)$ with $S=\ker (F\onto Q)$ and let $s = \Psi_n(x) = [S] \in M_{Y,n}$.
Consider the deformation functors $\Def_{F\onto Q}$ and $\Def_S$ and their tangent-obstruction theories given respectively by $T_i = \Ext^{i-1}(S,Q)$ and $T'_i = \Ext^i(S,S)$ for $i=1,2$. The natural transformation 
\[
\eta\colon \Def_{F\onto Q}\to \Def_S
\]
taking a surjection to its kernel involves pro-representable functors (note that $\Def_S$ is pro-representable because $S$ is simple), and it induces an isomorphism on tangent spaces and an injection on obstruction spaces (cf.~Corollary \ref{IsoTg_InclusionOb}). Then Lemma \ref{Prop:Def_Functors} implies that $\eta$ is an isomorphism of deformation functors.
This implies formal \'etaleness of $\Psi_n$ by a direct application of the formal criterion. In a little more detail, consider a square zero extension $\iota\colon T\into \overline T$ of fat points, and a commutative diagram
\[
\begin{tikzcd}[row sep = large,column sep = large]
T\arrow[hook,swap]{d}{\iota}\arrow{r}{g} & \mathrm Q \arrow{d}{\Psi_n} \\
\overline T\arrow[swap]{r}{\overline g}\arrow[dotted]{ur}[description]{\alpha} & M_{Y,n}
\end{tikzcd}
\]
where $\alpha$ is the \emph{unique} extension we need to find. Using pro-representability of $\Def_{F\onto Q}$ and $\Def_S$, the condition that $\eta$ is a natural isomorphism translates into a commutative diagram
\[
\begin{tikzcd}[row sep = large]
\Hom_x(\overline T,\mathrm Q)\isoarrow{d}\arrow{r}{\circ\, \iota} & \Hom_x(T,\mathrm Q)\isoarrow{d} \\
\Hom_s(\overline T,M_{Y,n})\arrow{r}{\circ\, \iota} & \Hom_s(T,M_{Y,n})
\end{tikzcd}
\]
where the vertical isomorphisms (composition with $\Psi_n$) are precisely the isomorphisms $\eta_T$ and $\eta_{\overline T}$. Since $\overline g\in \Hom_s(\overline T,M_{Y,n})$ lifts to a morphism $\alpha\in \Hom_x(\overline T,\mathrm Q)$ and both $\alpha \circ \iota$ and $g$ map to $\Psi_n \circ g\in \Hom_s(T,M_{Y,n})$, they must be equal, for the vertical map on the right is also an isomorphism. Thus $\alpha$ is the required (clearly unique) lift, proving that $\Psi_n$ is formally \'etale.

Thus $\Psi_n$ is an injective \'etale morphism, i.e.~an open immersion.
\end{proof}

\subsection{Symmetry in case $(\star)$}

In this section we assume the pair $(Y,F)$ satisfies $(\star)$ and we show that the obstruction theory constructed in Theorem \ref{THM:Ob_Theory} in this case becomes symmetric after suitably shrinking the Quot scheme.

The Quot-to-Chow morphism (see \cite[Section 6]{Grothendieck_Quot} or \cite[Cor.~$7.15$]{Rydh1} for its construction)
\[
\sigma_Y\colon \Quot_Y(F,n) \to \Sym^nY
\]
takes a quotient $[F\onto Q]$ to the $0$-cycle determined by the set-theoretic support $\Supp(Q)\subset Y$, weighted by the length. For any open subscheme $U\subset Y$, the preimage of $\Sym^nU\subset \Sym^nY$ under $\sigma_Y$ gives an open subscheme
\[
\mathrm Q_U \subset \Quot_Y(F,n) = \mathrm Q
\]
isomorphic to $\Quot_U(F|_U,n)$. Note that such Quot scheme makes sense, even though $U$ is only quasi-projective, because the support of a family of $0$-dimensional quotients is always proper over the base.

We now consider the diagram
\[
\begin{tikzcd}[row sep = large, column sep = large]
U\times \mathrm Q_U \arrow[swap]{dr}{\pi}\arrow[hook]{r}{a} & Y\times \mathrm Q_U\MySymb{dr} \arrow{d}{\overline p}\arrow[hook]{r}{j} & Y\times \mathrm Q\arrow{d}{p} \\
& \mathrm Q_U\arrow[hook]{r}{i} & \mathrm Q
\end{tikzcd}
\]
and form the pullback
\be\label{Pullback_E}
i^\ast \mathbb E = \RR \overline{p}_\ast (j^\ast \RRlHom(\SS,\SS)_0\otimes \omega_{\overline p})[2],
\ee
where the identification follows from base change and by $\omega_{\overline p} = j^\ast \omega_p$.
Since the inclusions $i$, $j$ and $a$ are open, their pullbacks are underived. Since dualising sheaves are invertible, tensor products $(-)\otimes \omega$ are also underived.

 Let us introduce the notation
\begin{align*}
\mathbb H_Y &= j^\ast \RRlHom(\SS,\SS)_0\in \DD(Y\times \mathrm Q_U),\\
\mathbb H_U &= a^\ast \mathbb H_Y\in \DD(U\times \mathrm Q_U),\\
\mathbb E_U &= \RR\pi_\ast(\mathbb H_U\otimes\omega_\pi)[2]\in \DD(\mathrm Q_U). 
\end{align*}
Since $\omega_\pi=a^\ast\omega_{\overline p}$, we can write
\be\label{Definition_F}   
\mathbb E_U[-2] 
=\RR \overline{p}_\ast \RR a_\ast(a^\ast\mathbb H_Y\otimes a^\ast\omega_{\overline p}) 
=  \RR\overline{p}_\ast(\mathbb H_Y\otimes \omega_{\overline p}\overset{\mathbf L}{\otimes} \RR a_\ast \O_{U\times \mathrm{Q}_U}).
\ee
where the first identity follows from $\RR\pi_\ast = \RR \overline p_\ast\circ \RR a_\ast$ and the second one uses the projection formula along the open immersion $a$. 
From Equations \eqref{Pullback_E} and \eqref{Definition_F}, the canonical morphism $\O_{Y\times \mathrm Q_U}\to \RR a_\ast \O_{U\times \mathrm{Q}_U}$ induces a canonical \emph{isomorphism}
\[
\alpha\colon i^\ast \mathbb E \to \mathbb E_U.
\]
To see that $\alpha$ is an isomorphism, it is enough to observe that its cone vanishes. The cone $K$ of $\O_{Y\times \mathrm{Q}_U} \to \RR a_\ast \O_{U\times \mathrm{Q}_U}$ is supported on $Z = Y\times \mathrm{Q}_U\setminus U\times \mathrm{Q}_U$. Now we note that $\mathbb H_Y\otimes^{\mathbf L} K$, which in principle is supported on $Z$, vanishes. To see this, first set $\mathcal F = q^\ast F$. Then, since $\mathcal S|_{Z} = \mathcal F|_{Z}$ (because the support of the quotients is now constrained on $U$),  we have that $\RRlHom(\mathcal S,\mathcal S)_0|_Z = \RRlHom(\mathcal F,\mathcal F)_0|_Z$, but $\RRlHom(\mathcal F,\mathcal F)_0 = 0$ since $F$ is exceptional. Thus $\mathbb H_Y\otimes^{\mathbf L} K=0$, and this implies that the cone of $\alpha$ also vanishes.

Composing the inverse of $\alpha$ with the map
\[
i^\ast \mathbb E \xrightarrow{i^\ast \phi} i^\ast \mathbb L_{\mathrm{Q}} \,\widetilde{\to}\, \mathbb L_{\mathrm{Q}_U},
\]
we obtain a perfect obstruction theory
\[
\phi_U\colon \mathbb E_U \to \mathbb L_{\mathrm{Q}_U}.
\]

\begin{prop}\label{Lemma:Symmetry}
Let $U\subset Y$ be an open subscheme such that $\omega_U$ is trivial.
Then the map $\phi_U\colon \mathbb E_U\to \L_{\mathrm Q_U}$ is a symmetric perfect obstruction theory.
\end{prop}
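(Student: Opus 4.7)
The plan splits the claim into two parts: (i) perfection of $\phi_U$ as an obstruction theory, which follows formally from Theorem \ref{THM:Ob_Theory} once one uses that $\alpha$ is an isomorphism, and (ii) construction of the symmetry isomorphism $\theta_U\colon \mathbb E_U\,\widetilde{\to}\,\mathbb E_U^\vee[1]$, obtained by combining the trivialization of $\omega_U$ with Grothendieck duality along the \emph{proper} morphism $\overline p$. The technical content lies in (ii): although $\pi$ itself is not proper, the proper support of $\mathbb H_Y$ over $\mathrm Q_U$ (forced by exceptionality of $F$) lets us transfer all computations to $\overline p$, along which Grothendieck duality is immediately applicable.

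For (i), since $\alpha\colon i^\ast\mathbb E\,\widetilde{\to}\,\mathbb E_U$ is an isomorphism and $\mathbb E$ is perfect in $[-1,0]$ by Theorem \ref{THM:Ob_Theory}, the same holds for $\mathbb E_U$. By construction $\phi_U$ is $(i^\ast\phi)\circ\alpha^{-1}$ composed with the canonical identification $i^\ast\mathbb L_{\mathrm Q}\,\widetilde{\to}\,\mathbb L_{\mathrm Q_U}$ (available because $i$ is an open immersion). Pullback along $i$ is exact and preserves both the isomorphism on $h^0$ and the surjectivity on $h^{-1}$, so $\phi_U$ is a perfect obstruction theory.

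For (ii), the key observation — already used to show that $\alpha$ is an isomorphism — is that $\mathbb H_Y$ is set-theoretically supported on $\Supp(\mathcal Q|_{\mathrm Q_U})\subset U\times \mathrm Q_U$: outside this support $\mathcal S$ coincides with $q^\ast F$, and exceptionality of $F$ forces $\RRlHom(q^\ast F,q^\ast F)_0=0$. As this support is finite (hence proper) over $\mathrm Q_U$ along $\overline p$, the projection-formula argument from the construction of $\alpha$ yields
\[
\RR a_\ast\mathbb H_U = \mathbb H_Y,\qquad \RR a_\ast(\mathbb H_U\otimes\omega_\pi) = \mathbb H_Y\otimes\omega_{\overline p},
\]
and therefore $\RR\pi_\ast\mathbb H_U=\RR\overline p_\ast\mathbb H_Y$ and $\mathbb E_U=\RR\overline p_\ast(\mathbb H_Y\otimes\omega_{\overline p})[2]$. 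The trivialization $\omega_U\,\widetilde{\to}\,\O_U$ pulls back to a trivialization $\omega_\pi\,\widetilde{\to}\,\O_{U\times\mathrm Q_U}$; combined with the above, this produces an isomorphism $\mathbb E_U[-2]\,\widetilde{\to}\,\RR\overline p_\ast\mathbb H_Y$.

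Finally, apply Grothendieck duality for the smooth proper morphism $\overline p$ of relative dimension $3$, together with the canonical self-duality $\mathbb H_Y\,\widetilde{\to}\,\mathbb H_Y^\vee$, to get
\[
(\RR\overline p_\ast\mathbb H_Y)^\vee = \RR\overline p_\ast\RRlHom(\mathbb H_Y,\omega_{\overline p}[3]) = \RR\overline p_\ast(\mathbb H_Y\otimes\omega_{\overline p})[3].
\]
After shifting by $[-1]$ and using the isomorphisms from the previous paragraph, the left-hand side is identified with $\mathbb E_U^\vee[1]$ and the right-hand side with $\mathbb E_U$, yielding the desired $\theta_U\colon \mathbb E_U\,\widetilde{\to}\,\mathbb E_U^\vee[1]$. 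The identity $\theta_U^\vee[1]=\theta_U$ follows from \cite[Lemma 1.23]{BFHilb}, precisely as in the Calabi--Yau case of Theorem \ref{THM:Ob_Theory}. The main obstacle is the duality step itself: Grothendieck duality requires properness, which $\pi$ lacks, so the whole argument rests on legitimately moving from $\pi$ to $\overline p$ via the proper support of $\mathbb H_Y$ — and this in turn is precisely where the exceptionality assumption on $F$ enters.
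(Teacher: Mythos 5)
Your proof is correct and follows essentially the same route as the paper: reduce everything to Grothendieck duality along the proper morphism $\overline p$ via the projection formula along the open immersion $a$, use the trivialisation of $\omega_\pi$ and the self-duality of $\mathbb H_Y$, and deduce $\theta_U^\vee[1]=\theta_U$ from \cite[Lemma 1.23]{BFHilb}. The only (harmless) variation is that you collapse $\mathbb H_Y\overset{\mathbf L}{\otimes}\RR a_\ast \O_{U\times \mathrm Q_U}$ to $\mathbb H_Y$ at the outset, reusing the vanishing established in the construction of $\alpha$, whereas the paper carries the factor $\RR a_\ast \O_{U\times \mathrm Q_U}$ through the duality computation and invokes its self-duality.
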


\begin{proof}
Any choice of trivialisation $\omega_U \,\widetilde{\to}\, \O_U$ induces, by pullback along $U\times \mathrm Q_U \to U$, a trivialisation $\omega_\pi \,\widetilde{\to}\, \O_{U\times \mathrm Q_U}$, that we use to construct an isomorphism
\[
\mathbb E_U[-2] = \RR\pi_\ast(\mathbb H_U\otimes \omega_\pi) \,\widetilde{\to}\, \RR\pi_\ast\mathbb H_U.
\]
From now on the proof is similar to that of Theorem \ref{THM:Ob_Theory}, except that we cannot use Grothendieck duality for $\pi$, since it is not proper. Thus we include full details.

Dualising and shifting the last displayed isomorphisms, we obtain
\[
\theta_U\colon (\RR\pi_\ast\mathbb H_U)^\vee[-1] \,\widetilde{\to}\,\mathbb E_U^\vee[1].
\]
We need to show that $(\RR\pi_\ast\mathbb H_U)^\vee[-1] = \mathbb E_U$. Note that, again by the projection formula along $a$, one has
\be\label{RpiE}
\RR\pi_\ast\mathbb H_U = \RR\overline{p}_\ast(\mathbb H_Y \overset{\mathbf L}{\otimes} \RR a_\ast \O_{U\times \mathrm{Q}_U}),
\ee
and moreover both complexes $\mathbb H_Y$ and $\RR a_\ast \O_{U\times \mathrm{Q}_U}$ are canonically self-dual.
Then
\begin{align*}
    (\RR\pi_\ast\mathbb H_U)^\vee[-1] 
    &\,\,=\,\, \RRlHom(\RR\overline{p}_\ast(\mathbb H_Y \overset{\mathbf L}{\otimes} \RR a_\ast \O_{U\times \mathrm{Q}_U}),\O_{\mathrm Q_U})[-1] & \small{\textrm{by \eqref{RpiE}}} \\
    &\,\,=\,\,\RR \overline{p}_\ast\RRlHom(\mathbb H_Y \overset{\mathbf L}{\otimes} \RR a_\ast \O_{U\times \mathrm{Q}_U},\omega_{\overline p}[3])[-1] & \small{\textrm{Grothendieck duality}}\\
    &\,\,=\,\,\RR \overline{p}_\ast\RRlHom(\mathbb H_Y \overset{\mathbf L}{\otimes} \RR a_\ast \O_{U\times \mathrm{Q}_U},\omega_{\overline p})[2] & \small{\textrm{shift}}\\
    &\,\,=\,\,\RR \overline p_\ast (\mathbb H_Y^\vee \overset{\mathbf L}{\otimes} (\RR a_\ast \O_{U\times \mathrm{Q}_U})^\vee \otimes \omega_{\overline p})[2] &  \small{\textrm{Hom and tensor}}\\
    &\,\,=\,\,\RR \overline p_\ast (\mathbb H_Y \overset{\mathbf L}{\otimes}\RR a_\ast \O_{U\times \mathrm{Q}_U} \otimes\omega_{\overline p}   )[2] & \small{\textrm{self-duality}}\\
    &\,\,=\,\, \mathbb E_U & \small{\textrm{by \eqref{Definition_F}}}.
\end{align*}
The symmetry property $\theta_U^\vee[1] = \theta_U$ again follows from \cite[Lemma 1.23]{BFHilb}.
\end{proof}

\begin{example}
Taking $Y=\P^3$, $U = \A^3$, $F$ an exceptional bundle on $\P^3$ of rank $r$, we see that $\Quot_{\A^3}(\O^{\oplus r},n)$ carries a symmetric perfect obstruction theory. As far as we know, it might not be possible to construct exceptional bundles on $\P^3$ of any given rank. However, $\Quot_{\A^3}(\O^{\oplus r},n)$ does have a symmetric obstruction theory for every $r$. This follows directly from its description as a critical locus \cite[Thm.~2.6]{BR18}, that we recall in Section \ref{subsec:local_model}.
\end{example}

\begin{aside}
The problems of constructing exceptional bundles and proving their stability are classical in Algebraic Geometry. By the foundational work of Dr\'ezet and Le Potier, all exceptional bundles on $\P^2$ are stable \cite{MR816365}. By work of Zube \cite{MR1074787}, the same is true for any K3 surface with Picard group $\Z$. This fact is used in \emph{loc.~cit.}~to prove that any exceptional bundle on $\P^3$ is stable. Mir\'o-Roig and Soares \cite{MR2425712} prove that if $Y\subset \P^n$ is a smooth complete intersection $3$-fold of type $(d_1,\ldots,d_{n-3})$, with $d_1+\cdots+d_{n-3}\leq n$ and $n\geq 4$, then any exceptional bundle on $Y$ is stable.
\end{aside}

\subsection{The stable case}
Let $H$ be a polarisation on the $3$-fold $Y$, i.e.~an ample class in $H^2(Y,\mathbb Z)$. Assume $F$ is a  $\mu_H$-stable (and rigid) vector bundle. Then the open immersion $\Psi_n$ of Proposition \ref{prop:Open_Immersion} factors through an open immersion
\be\label{Open_Immersion}
\Phi_n\colon \Quot_Y(F,n) \into \mathcal M_H^{\st}(v_n),
\ee
where the target is the moduli space of $\mu_H$-stable sheaves with Chern character $v_n = \ch(F) - (0,0,0,n)$. 

\begin{remark}\label{Connected_Component}
The open immersion $\Phi_n$ is also closed. Indeed, $\mathcal M_H^{\st}(v_n)$ is a separated scheme, so by properness of the Quot scheme $\Phi_n$ is a proper morphism. But a proper open immersion is a closed immersion. Hence $\Phi_n$ is the inclusion of a union of connected components.
\end{remark}

\begin{remark}
The perfect obstruction theory on the moduli space $\mathcal M_H^{\st}(v_n)$ constructed by Thomas \cite[Cor.~3.39]{ThomasThesis} (in the case when there are no strictly $\mu_H$-semistable sheaves and $Y$ has an anticanonical section) pulls back via $\Phi_n$ to the one constructed in Theorem \ref{THM:Ob_Theory}. For instance, in the Calabi--Yau case, the condition
\[
\gcd(r,\ch_1(F)\cdot H^2) = 1
\]
implies that there are no strictly semistable sheaves. In fact, it implies the stronger statement that there exists a universal sheaf over $\mathcal M^{\st}_H(v_n)$, see \cite[Cor.~B.2]{BR18} for a proof.
\end{remark}

\begin{example}
If $F = \O_Y$, one has $\Quot_Y(\O_Y,n) = \Hilb^nY$ and $\mathcal M^{\st}(1,0,0,-n)$, independent of the polarisation, is the moduli space of ideal sheaves (we are using that the determinant is fixed, thanks to $H^1(Y,\O_Y)=0$). In this case the open immersion $\Phi_n$ of \eqref{Open_Immersion} is also surjective: this recovers the classical identification of $\Hilb^nY$ with the moduli space of torsion free sheaves of Chern character $(1,0,0,-n)$.
\end{example}

\section{Higher rank Donaldson--Thomas invariants}\label{sec:Higher_Rk_DT}

\subsection{Calabi--Yau 3-folds}
Let us recall from \cite{BR18} the following weighted Euler characteristic calculation.

\begin{theorem}[{\cite[Thm.~A]{BR18}}] \label{Thm:QuotVirtualChi}
Let $Y$ be a smooth quasi-projective $3$-fold, $F$ a locally free sheaf of rank $r$. Then
\[
\sum_{n\geq 0}\widetilde\chi(\Quot_Y(F,n)) q^n = \mathsf M((-1)^rq)^{r \chi(Y)},
\]
where $\mathsf M(q) = \prod_{m\geq 1}(1-q^m)^{-m}$ is the MacMahon function.
\end{theorem}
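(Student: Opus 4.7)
The plan is to reduce the global statement to a local computation on $\A^3$ via the Quot-to-Chow morphism, and then re-globalise using the power structure on $K_0(\Var_\C)$. I would first stratify $\Quot_Y(F,n)$ along $\sigma_Y \colon \Quot_Y(F,n) \to \Sym^n Y$. Since Behrend's function $\nu$ is constructible and additive, $\widetilde\chi$ splits along this stratification. Because $F$ is locally free of rank $r$, étale-locally on $Y$ the pair $(Y,F)$ is isomorphic to $(\A^3, \O^{\oplus r})$, so the fibre of $\sigma_Y$ over a cycle $\sum_i n_i[p_i]$ with $p_i$ pairwise distinct is, set-theoretically and obstruction-theoretically, a product $\prod_i \Quot_{\A^3,0}(\O^{\oplus r}, n_i)$, where the subscript $0$ denotes the punctual Quot scheme of quotients supported at the origin.

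The local-to-global bookkeeping can be encoded via the power structure on $K_0(\Var_\C)$ recalled in Section \ref{sec:Power_structures}. If
\[
\mathsf P_r(q) = \sum_{n\geq 0}\widetilde\chi\bigl(\Quot_{\A^3,0}(\O^{\oplus r},n)\bigr)\,q^n
\]
denotes the punctual generating series, then the standard application of the power-structure formula (following the pattern of the rank $1$ computation by Behrend--Fantechi and Li) yields
\[
\sum_{n\geq 0}\widetilde\chi(\Quot_Y(F,n))\,q^n = \mathsf P_r(q)^{\chi(Y)}.
\]
This step uses the multiplicativity of $\nu$ under products and its invariance under étale base change. The theorem thus reduces to showing $\mathsf P_r(q) = \mathsf M((-1)^r q)^r$.

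To establish this local identity I would exploit the critical locus description $\Quot_{\A^3}(\O^{\oplus r},n) = Z(\dd f_{r,n})$ of \cite[Thm.~2.6]{BR18} combined with torus localisation. A suitable torus $T \subset (\C^*)^3 \times \GL_r$ acts on the local model with isolated fixed points in bijection with $r$-tuples of plane partitions $\vec\pi = (\pi_1,\dots,\pi_r)$ of total size $n$, all of which have origin in their support. On a critical locus with such a torus action, Behrend's function at each fixed point $x_{\vec\pi}$ is $\pm 1$, with the sign controlled by the parity of the Zariski tangent dimension; a character computation on the virtual tangent complex shows this parity equals $rn \bmod 2$. Summing $(-1)^{rn}q^{n}$ over colored plane partitions and invoking the MacMahon identity $\sum_\pi q^{|\pi|} = \mathsf M(q)$ for each of the $r$ factors delivers $\mathsf P_r(q) = \mathsf M((-1)^r q)^r$.

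The principal obstacle is the sign calculation at the torus fixed points: one must track the parity of the tangent weights on $Z(\dd f_{r,n})$ precisely enough to extract the factor $(-1)^{rn}$, which is exactly what produces the substitution $q \mapsto (-1)^r q$ in MacMahon's formula. A secondary difficulty is justifying the power-structure step rigorously, since although the punctual stratification is set-theoretically transparent, showing that Behrend-weighted counts behave multiplicatively under products and are unchanged under the étale identification of local models requires invoking the relevant functoriality of $\nu$ (in particular the compatibility with the symmetric obstruction theory constructed on $\Quot_Y(F,n)$ in the applicable cases).
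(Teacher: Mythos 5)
The paper does not actually prove this statement: Theorem \ref{Thm:QuotVirtualChi} is imported verbatim from \cite[Thm.~A]{BR18}, so there is no internal proof to compare against. That said, your outline is essentially the strategy of the cited proof: stratify over $\Sym^nY$ via the Quot-to-Chow map, use that the Behrend function is \'etale-local and multiplicative to reduce to a punctual series computed on $\A^3$, globalise via the Euler-characteristic specialisation of the power structure to produce the exponent $\chi(Y)$, and evaluate the local series on the critical locus $Z(\dd f_{r,n})\subset \Quot^n_r$ by torus localisation over $r$-tuples of plane partitions. Three points deserve comment. First, your closing parenthetical about needing ``compatibility with the symmetric obstruction theory constructed on $\Quot_Y(F,n)$'' is a misstep: the theorem holds for an arbitrary smooth quasi-projective $Y$ and arbitrary locally free $F$, where no such obstruction theory is available (the paper constructs one only under Assumption \ref{assumption193}); the globalisation must, and does, use only the intrinsic Behrend function and its \'etale-local and multiplicative behaviour. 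Second, the punctual series should be weighted by the restriction of $\nu_{\Quot_{\A^3}(\O^{\oplus r},n)}$ to the punctual locus, not by the Behrend function of the punctual Quot scheme regarded as a scheme in its own right; here the discrepancy is harmless because every torus fixed point lies in the punctual locus, but it should be stated. Third, the sign, which you correctly identify as the principal obstacle, is obtained in \cite{BR18} more cheaply than by a fixed-point-by-fixed-point parity computation: $f_{r,n}$ is homogeneous for a circle-compact $\G_m$-action (cf.~the argument in Lemma \ref{Phi_No_Monodromy}), whence $\widetilde\chi(\Quot_{\A^3}(\O^{\oplus r},n)) = (-1)^{\dim\Quot^n_r}\chi(\Quot_{\A^3}(\O^{\oplus r},n)) = (-1)^{rn}\chi(\Quot_{\A^3}(\O^{\oplus r},n))$ since $\dim\Quot^n_r = 2n^2+rn$; counting fixed points then gives $\mathsf M(q)^r$ and hence $\mathsf M((-1)^rq)^r$. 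With these adjustments your argument is sound and coincides with the cited one.
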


Let now $Y$ be a projective Calabi--Yau $3$-fold, $F$ a simple rigid vector bundle. Set
\[
\DDT_F(q) = \sum_{n\geq 0} \DDT_{F,n} q^n,
\]
where $\DDT_{F,n}$ is the degree of the virtual class constructed in Corollary \ref{Cor:Virtual_Class}, see \eqref{def:dt_quot}.

Theorem \ref{Thm:QuotVirtualChi} has the following immediate consequence.
\begin{corollary}\label{cor:DT_CY3}
If $F$ is a simple rigid vector bundle on a Calabi--Yau $3$-fold $Y$, then
\be\label{eqn:DT_Higher_rank}
\DDT_F(q) = \mathsf M((-1)^rq)^{r\chi(Y)}.
\ee
\end{corollary}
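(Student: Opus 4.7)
\begin{proofof}{Corollary \ref{cor:DT_CY3} (sketch)}
The plan is to reduce the computation of the virtual integral to a weighted Euler characteristic via Behrend's theorem, and then invoke Theorem \ref{Thm:QuotVirtualChi} to conclude.

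First, I would observe that since $F$ is simple and rigid and $Y$ is Calabi--Yau, the pair $(Y,F)$ satisfies case $(\ast)$ of Assumption \ref{assumption193}. Therefore Theorem \ref{THM:Ob_Theory} applies and equips $\Quot_Y(F,n)$ with a symmetric perfect obstruction theory of virtual dimension $0$; Corollary \ref{Cor:Virtual_Class} then produces the virtual fundamental class $[\Quot_Y(F,n)]^{\vir} \in A_0(\Quot_Y(F,n))$. Because $Y$ is projective, the Quot scheme is proper, so the integral $\DDT_{F,n}=\int_{[\Quot_Y(F,n)]^{\vir}}1$ is a well-defined integer.

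Next, the symmetry of the obstruction theory is exactly the hypothesis needed for Behrend's theorem \cite{Beh}, which identifies the degree of the virtual class of a proper symmetric obstruction theory with the weighted Euler characteristic defined by the microlocal function $\nu$. Thus
\[
\DDT_{F,n} = \int_{[\Quot_Y(F,n)]^{\vir}} 1 = \widetilde\chi\bigl(\Quot_Y(F,n)\bigr).
\]
Summing over $n$ with the formal variable $q$, one obtains
\[
\DDT_F(q) = \sum_{n\geq 0} \widetilde\chi\bigl(\Quot_Y(F,n)\bigr)\,q^n.
\]

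Finally, I would invoke Theorem \ref{Thm:QuotVirtualChi} (proved in \cite{BR18}), valid for any smooth quasi-projective $3$-fold and any locally free sheaf of rank $r$, and apply it to our $(Y,F)$ to obtain the identity $\DDT_F(q) = \mathsf M((-1)^r q)^{r\chi(Y)}$. No step here is really an obstacle: the real content has been packaged into the symmetry assertion of Theorem \ref{THM:Ob_Theory} and into the Euler characteristic formula of \cite{BR18}. The only thing to check carefully is that the hypotheses of Behrend's theorem are met, namely symmetry of the obstruction theory and properness of the moduli space, both of which are guaranteed here.
\end{proofof}
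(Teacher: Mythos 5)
Your proposal is correct and follows the paper's own argument exactly: properness of the Quot scheme plus the symmetry of the obstruction theory from Theorem \ref{THM:Ob_Theory} give $\DDT_{F,n}=\widetilde\chi(\Quot_Y(F,n))$ by Behrend's theorem, and Theorem \ref{Thm:QuotVirtualChi} then yields the MacMahon formula. No gaps.
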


\begin{proof}
Since the Quot scheme is proper and the obstruction theory constructed in Theorem \ref{THM:Ob_Theory} is \emph{symmetric}, by Behrend's theorem we have
\be\label{Virtual_chi_Quot}
\DDT_{F,n}=\widetilde\chi(\Quot_Y(F,n)).
\ee
The result then follows from Theorem \ref{Thm:QuotVirtualChi}.
\end{proof}

\subsection{The stable case}
Classical Donaldson--Thomas theory is defined for the moduli space of stable sheaves $\mathcal M^{\st}_H(\alpha)$, where $\alpha \in H^\ast(Y,\Q)$ is a given Chern character.
If $F$ is a $\mu_H$-stable rigid vector bundle,  \eqref{Virtual_chi_Quot} computes the virtual enumerative contribution of the \emph{connected component} (cf.~Remark \ref{Connected_Component})
\[
\Quot_Y(F,n) \subset \mathcal M^{\st}_H(v_n).
\]
Therefore Equation \eqref{eqn:DT_Higher_rank} can be seen as an explicit example of (classical) higher rank DT invariants.

\begin{example}\label{Ex:ACM_Quintic}
Recall that a vector bundle $F$ on a hypersurface $Y\subset \P^{r+1}$ is arithmetically Cohen--Macaulay if $H^i(Y,F(k))=0$ for $0<i<r$ and for all $k\in \Z$. By a result of Chiantini and Madonna \cite[Thm.~1.3]{MR1984199}, every stable arithmetically Cohen--Macaulay rank $2$ bundle on a general quintic $Y\subset \P^4$ is rigid. Therefore, since $\chi(Y)=-200$, for any such $F$ Equation \eqref{eqn:DT_Higher_rank} yields
\[
\DDT_F(q) = \mathsf M(q)^{-400}.
\]
\end{example}

This discussion motivates the following:

\begin{problem}
Construct examples of stable rigid vector bundles on Calabi--Yau $3$-folds.
\end{problem}

\subsection{General $3$-folds}\label{sec:DT_general_threefold}
Let $Y$ be a smooth projective $3$-fold, $F$ a vector bundle of rank $r$. The numbers $\DDT_{F,n}$ and their generating function $\DDT_F(q)$ can be defined as in \eqref{def:dt_quot} whenever the virtual class is defined. In the rank $1$ case, one has
\[
\DDT_{\O_Y}(q) = \mathsf M(-q)^{\int_Yc_3(T_Y\otimes \omega_Y)}.
\]
 See \cite{MNOP2} for a proof in the toric case and \cite{LEPA,JLi} for a general proof. We propose the following conjecture.

 \begin{conjecture}\label{conj}
Let $Y$ be a smooth projective $3$-fold, $F$ a vector bundle of rank $r$ on $Y$ such that the series $\DDT_F(q)$ is defined. Then there is an identity
\[
\DDT_F(q) = \mathsf M((-1)^rq)^{r\int_Yc_3(T_Y\otimes \omega_Y)}.
\]
 \end{conjecture}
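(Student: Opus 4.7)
The plan is to mimic the strategy that succeeded in the rank~$1$ case, namely torus localisation plus MNOP-type vertex calculations for toric threefolds, combined with Levine--Pandharipande style algebraic cobordism to pass to arbitrary smooth projective threefolds. A preliminary step is to construct a perfect obstruction theory on $\Quot_Y(F,n)$ in enough generality, since Assumption~\ref{assumption193} is too restrictive for the conjectured formula to even make sense in every case of interest. A natural route is to rerun the construction of Theorem~\ref{THM:Ob_Theory} with $\RRlHom(\SS,\SS)$ in place of $\RRlHom(\SS,\SS)_0$, in parallel with the standard perfect obstruction theory on $\Hilb^nY$ for an arbitrary threefold $Y$.

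With $\DDT_{F,n}$ defined, the next step is deformation invariance under flat deformations of the pair $(Y,F)$, so that the integers depend only on numerical data. Applying a suitable degeneration formula (after extending it to this setting) and algebraic cobordism for pairs (threefold, vector bundle) generalising \cite{LEPA,JLi}, I would reduce the computation to toric threefolds equipped with the trivial bundle $F=\O_Y^{\oplus r}$. The output is that $\DDT_F(q)$ must be a universal power series whose logarithm is a linear combination of degree-$3$ Chern numbers built from $Y$ and $F$; the surviving combination is then pinned down by comparison with the rank~$1$ result of \cite{MNOP2,LEPA,JLi} and with the Calabi--Yau case of Corollary~\ref{cor:DT_CY3}.

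For the toric computation, torus localisation identifies the fixed locus with $r$-tuples of monomial quotients concentrated at the torus-fixed points of $Y$. The vertex contribution is extracted from the critical locus structure on $\Quot_{\A^3}(\O^{\oplus r},n)$ of \cite[Thm.~2.6]{BR18}, refining the $\Hilb^n\A^3$ vertex of MNOP; edge contributions are assembled in the same manner as in \emph{loc.~cit.}, with the weight of each edge read off from $T_Y\otimes\omega_Y$ via the localised virtual normal bundle. If the vertex is genuinely $r$-independent up to the substitution $q\mapsto(-1)^rq$, then the exponent of the MacMahon function automatically picks up the expected factor of $r$.

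The main obstacle is the cobordism reduction: showing that in rank $r$ only the single Chern number $r\int_Y c_3(T_Y\otimes\omega_Y)$ survives in the exponent, rather than some larger universal polynomial also involving $c_i(F)$. In the rank~$1$ case this was the heart of \cite{LEPA,JLi}, requiring a degeneration formula together with a supply of explicit test examples to pin down the universal coefficients. A secondary but nontrivial obstacle is that the obstruction theory ceases to be symmetric away from the Calabi--Yau setting, so Behrend's identity $\DDT_{F,n}=\widetilde\chi(\Quot_Y(F,n))$ (which drove Corollary~\ref{cor:DT_CY3}) fails, and the answer cannot simply be read off Theorem~\ref{Thm:QuotVirtualChi}.
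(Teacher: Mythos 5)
You should first note that the statement you are trying to prove is stated in the paper as Conjecture \ref{conj}, not as a theorem: the paper offers no proof, only the observation that the formula holds in rank $1$ (by \cite{MNOP2,LEPA,JLi}) and in the Calabi--Yau case (Corollary \ref{cor:DT_CY3}), and explicitly says the general case is open (a footnote records a later proof only for toric $Y$ with an equivariant exceptional $F$). Your proposal is therefore being measured against an open problem, and as written it is a research program rather than a proof: the construction of the obstruction theory in the required generality, the deformation invariance, the degeneration formula for pairs $(Y,F)$, and the cobordism reduction are all asserted as steps to be carried out, and you yourself flag the last of these as the main obstacle. None of these steps is supplied, so the argument does not close.

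Two of the gaps deserve to be named more sharply. First, your preliminary step goes in the wrong direction: you propose to use $\RRlHom(\SS,\SS)$ in place of $\RRlHom(\SS,\SS)_0$, but the whole point of Proposition \ref{Vanishing_Tracefree} is that one must pass to the trace-free complex to kill $h^0$ and $h^3$ and get perfect amplitude $[-1,0]$; the full complex has $\Hom(S,S)\neq 0$ always and $\Ext^3(S,S)\neq 0$ in general, so it can never yield a perfect obstruction theory, and even the trace-free version requires $F$ simple (to force $\Hom(S,S)_0=0$) and rigid. This bites precisely where you need it most: your reduction lands on toric $Y$ with $F=\O_Y^{\oplus r}$, which for $r>1$ is not simple, so $\DDT_F(q)$ is not even defined there by the paper's construction --- the hypothesis ``such that $\DDT_F(q)$ is defined'' excludes your target of reduction. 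Second, the degeneration formula and algebraic cobordism for pairs (threefold, rank-$r$ bundle), together with the argument that only the single Chern number $r\int_Y c_3(T_Y\otimes\omega_Y)$ survives rather than a universal polynomial involving $c_i(F)$, constitute the entire difficulty of the problem; in rank $1$ this was the content of \cite{LEPA,JLi}, and nothing in your sketch indicates how to extend it. Your diagnosis of the obstacles is accurate, but a correct diagnosis of why a conjecture is hard is not a proof of it.
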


Conjecture \ref{conj} does not seem to trivially follow from the existing arguments in the rank $1$ case. Besides the rank $1$ case, the formula is true in the Calabi--Yau case, by \eqref{eqn:DT_Higher_rank}. We hope to get back to this question in the future.\footnote{Update: Conjecture \ref{conj} has recently been proven for a toric $3$-fold and an equivariant exceptional locally free sheaf $F$ by Fasola, Monavari and the author in \cite[Thm.~C]{FMR_K-DT}. The general case remains open.}

\section{The virtual motive of the Quot scheme}\label{Sec:Virtual_Motives}

Throughout this section, we drop all assumptions on $(Y,F)$ we had previously. We let $Y$ be an arbitrary smooth quasi-projective $3$-fold, $F$ a vector bundle of rank $r$, and we consider $\Quot_Y(F,n)$.
In this section we construct a \emph{virtual motive} for this Quot scheme, i.e.~a motivic weight
\[
\bigl[\Quot_Y(F,n)\bigr]_{\vir} \in \mathcal M_{\C}
\]
such that applying the map $\chi$ of \eqref{eulerchi} yields
\begin{align*}
    \chi \bigl[\Quot_Y(F,n)\bigr]_{\vir} &=\widetilde\chi(\Quot_Y(F,n)) \\
&= (-1)^{rn}\chi(\Quot_Y(F,n)),
\end{align*}
where the second equality is equivalent to Theorem \ref{Thm:QuotVirtualChi}.

\subsection{The local model}\label{subsec:local_model}
In this subsection we work on the local Calabi--Yau $3$-fold $Y=\A^3$. Fix $r\geq 1$ and $n\geq 0$. In \cite[Thm.~2.6]{BR18}, it was proved that 
\[
\Quot_{\A^3}(\O^{\oplus r},n)
\]
is a critical locus. In the case $r=1$ (corresponding to the Hilbert scheme of points) this was already known \cite[Prop.~3.1]{BBS}. In particular, $\Quot_{\A^3}(\O^{\oplus r},n)$ carries both the structures (symmetric obstruction theory, virtual motive) recalled in Section \ref{Sec:Virtual_Stuff}.

\subsubsection{The critical structure on the local Quot scheme}
We briefly review from \cite{BR18} the critical structure on $\Quot_{\A^3}(\O^{\oplus r},n)$. The affine space
\[
\mathcal R = \Set{(A,B,C,v_1,\ldots,v_r)|A,B,C \in \End(\C^n),\,v_i \in \C^n},
\]
parameterising triples of $n$ by $n$ matrices and $r$-tuples of $n$-vectors, has dimension $3n^2+rn$. It can be seen as the space of $(n,1)$-dimensional representations of the $3$-loop quiver endowed with $r$ \emph{framings} issuing from an additional vertex $\infty$, cf.~Figure \ref{L3quiver}.

\begin{figure}[ht]
\begin{tikzpicture}[>=stealth,->,shorten >=2pt,looseness=.5,auto]
  \matrix [matrix of math nodes,
           column sep={3cm,between origins},
           row sep={3cm,between origins},
           nodes={circle, draw, minimum size=7.5mm}]
{ 
|(A)| \infty & |(B)| 0 \\         
};
\tikzstyle{every node}=[font=\small\itshape]
\path[->] (B) edge [loop above] node {$A$} ()
              edge [loop right] node {$B$} ()
              edge [loop below] node {$C$} ();

\node [anchor=west,right] at (-0.2,0.1) {$\vdots$};
\node [anchor=west,right] at (-0.3,0.95) {$v_1$};              
\node [anchor=west,right] at (-0.3,-0.85) {$v_r$};              
\draw (A) to [bend left=25,looseness=1] (B) node [midway,above] {};
\draw (A) to [bend left=40,looseness=1] (B) node [midway] {};
\draw (A) to [bend right=35,looseness=1] (B) node [midway,below] {};
\end{tikzpicture}
\caption{The framed $3$-loop quiver.}\label{L3quiver}
\end{figure}
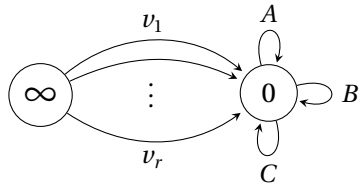

The group $\GL_n$ acts freely on the open subscheme
\[
U_{r,n}\subset \mathcal R
\]
parameterising tuples $(A,B,C,v_1,\ldots,v_r)$ such that the $\C$-linear span of the vectors of the form
\[
A^aB^bC^c\cdot v_i,\quad a,b,c \in \Z_{\geq 0},\quad 1\leq i\leq r
\]
has maximal dimension, i.e.~it equals $\C^n$. It was proved in \cite[Prop.~2.4]{BR18} that $U_{r,n}$ can be identified with a subspace of \emph{stable} framed representations of the $3$-loop quiver. The quotient
\[
\Quot^n_r = U_{r,n}/\GL_n
\]
is called \emph{non-commutative Quot scheme} in \cite{BR18}, by analogy with the case $r=1$, giving rise to the non-commutative Hilbert scheme. It is a smooth quasi-projective variety of dimension $2n^2+rn$. Consider the function
\be\label{Quiver_Potential}
f_{r,n}\colon \Quot^n_r \to \A^1,\quad (A,B,C,v_1,\ldots,v_r)\mapsto \Tr A[B,C].
\ee
\begin{theorem}[{\cite[Thm.~2.6]{BR18}}]\label{thm:quot_crit}
There is a scheme-theoretic isomorphism
\be\label{Critical_Quot}
\Quot_{\A^3}(\O^{\oplus r},n) \cong Z(\dd f_{r,n}) \subset \Quot^n_r.
\ee
\end{theorem}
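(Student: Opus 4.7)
My plan is to go through linear algebra data and identify both sides of \eqref{Critical_Quot} scheme-theoretically via an explicit functor of points.

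First, I would describe the closed subscheme $C_{r,n} \subset U_{r,n}$ cut out by the three commutator relations $[A,B]=[A,C]=[B,C]=0$ (a $3r(n^2)$ collection of equations, viewed entry by entry). The free $\GL_n$-action on $U_{r,n}$ restricts to a free action on $C_{r,n}$, and the quotient $C_{r,n}/\GL_n$ makes sense as a smooth quasi-projective scheme. The well-known Beilinson-type linear algebra description of zero-dimensional quotients gives a natural identification
\[
\Quot_{\A^3}(\O^{\oplus r},n) \,\widetilde{\to}\, C_{r,n}/\GL_n,
\]
constructed functorially as follows: to a family $q^*\O^{\oplus r}\onto \mathcal Q$ with $\mathcal Q$ locally free of rank $n$ over the base, associate $V=p_\ast\mathcal Q$ together with the endomorphisms $A,B,C$ coming from multiplication by the affine coordinates $x,y,z$ and the vectors $v_i \in V$ coming from the images of the standard sections of $\O^{\oplus r}$. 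The commutativity of $[A,B]$, $[A,C]$, $[B,C]$ is forced by $\C[x,y,z]$ being commutative; the stability condition (the span of $A^a B^b C^c v_i$ equals $V$) is equivalent to the surjectivity of the quotient; and conversely one reconstructs $\mathcal Q$ from the linear data by $\C[x,y,z]$-module structure. This is a well-known construction and I would cite (or quickly sketch) the argument that it is an isomorphism of schemes.

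Second, I would compute $\dd f_{r,n}$. Since $\Quot^n_r$ is a quotient of an open subscheme of the affine space $\mathcal R$ by the free action of $\GL_n$, and $f_{r,n}=\Tr A[B,C]$ is $\GL_n$-invariant (by cyclicity of the trace), it suffices to compute the partial derivatives of the pulled-back function on $U_{r,n}$. A direct calculation gives
\[
\frac{\partial f_{r,n}}{\partial A_{ij}} = [B,C]_{ji},\qquad
\frac{\partial f_{r,n}}{\partial B_{ij}} = [C,A]_{ji},\qquad
\frac{\partial f_{r,n}}{\partial C_{ij}} = [A,B]_{ji},
\]
while $\partial f_{r,n}/\partial (v_i)_k=0$ because $f_{r,n}$ is independent of the framing vectors. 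Consequently the Jacobian ideal of $f_{r,n}$ in $\O_{U_{r,n}}$ coincides, equation for equation, with the ideal generated by the entries of the three commutators. Taking the $\GL_n$-quotient yields
\[
Z(\dd f_{r,n}) = C_{r,n}/\GL_n,
\]
scheme-theoretically, and combining this with the previous identification finishes the proof.

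The main obstacle is the scheme-theoretic nature of the statement: the set-theoretic identification is immediate from the above, but one must verify that the Jacobian ideal equals the commutator ideal \emph{as ideals}, not just up to radical, and that the functorial identification of $\Quot_{\A^3}(\O^{\oplus r},n)$ with $C_{r,n}/\GL_n$ is an isomorphism and not merely a bijection on closed points. Both points require care: the partial derivative computation above does give the Jacobian ideal on the nose (since $f_{r,n}$ is a polynomial in the $A_{ij}, B_{ij}, C_{ij}$ of degree three), so the first issue is clean. For the second issue, I would invoke the universal property of the Quot scheme and construct the inverse morphism explicitly using the universal cokernel over $C_{r,n}$, exactly as in the $r=1$ case of the Hilbert scheme treated in \cite{BBS}, and as worked out in \cite{BR18}.
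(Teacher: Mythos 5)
Your proposal is correct and follows essentially the same route as the cited source: the paper itself gives no proof of this statement (it is imported verbatim from \cite[Thm.~2.6]{BR18}), and the argument there is exactly your ADHM-style identification of the Quot scheme with the stable locus of the commuting variety modulo $\GL_n$, combined with the observation that the Jacobian ideal of $\Tr A[B,C]$ is generated on the nose by the entries of the three commutators (the framing vectors not appearing in the potential). Two small slips to fix: the commutator relations give $3n^2$ equations, not $3rn^2$; and $C_{r,n}/\GL_n$ exists as a quasi-projective scheme but is \emph{not} smooth in general --- it is the Quot scheme itself, which the paper notes is singular already for $n=r>1$.
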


\begin{example}
The potential $f_{r,1}$ vanishes for any $r$, so $\Quot_{\A^3}(\O^{\oplus r},1) = \Quot^1_r$ is smooth of dimension $r+2$. On the other hand, unlike the Hilbert scheme $\Hilb^n\A^3$, which is nonsingular for $n\leq 3$, the Quot scheme $\Quot_{\A^3}(\O^{\oplus r},r)$ is singular for all $r>1$. Indeed, the submodule $S=(x,y,z)^{\oplus r}\subset \C[x,y,z]^{\oplus r}$ defines a point whose tangent space has dimension $3r^2$. But $3r^2 = \dim \Quot^r_r > \dim \Quot_{\A^3}(\O^{\oplus r},r)$ since $f_{r,r}\neq 0$. Even in rank $1$, if we replace $\O_{\A^3}$ by the ideal sheaf of a line $L\subset \A^3$, the Quot scheme $\Quot_{\A^3}(\mathscr I_L,2)$ turns out to be singular, cf.~\cite[Example 2.7]{DavisonR}. 
\end{example}

The virtual motive induced by the critical structure \eqref{Critical_Quot} takes the form
\be\label{Virtual_Motive_Quot_Affine}
\left[\Quot_{\A^3}(\O^{\oplus r},n)\right]_{\vir} = \L^{-\frac{2n^2+rn}{2}}\cdot \left[-\phi_{f_{r,n}}\right]
\ee
and we shall see (cf.~Lemma \ref{Phi_No_Monodromy}) that it lives in the monodromy-free subring $\mathcal M_{\C}\subset \mathcal M^{\hat\mu}_{\C}$.
Let us form the generating function
\[
\mathsf Z_r(\A^3,t) = \sum_{n\geq 0}\, \left[\Quot_{\A^3}(\O^{\oplus r},n)\right]_{\vir}\cdot t^n\in \mathcal M_{\C}\llbracket t\rrbracket.
\]
The following computation was carried out following step by step the rank $1$ calculation by Behrend--Bryan--Szendr\H{o}i \cite{BBS}.

\begin{prop}[{\cite[Prop.~2.3.6]{ThesisR}}] \label{Prop:Motivic_Affine}
There is an identity
\be\label{id:mot}
\mathsf Z_{r}(\A^3,t) = \prod_{m=1}^\infty \prod_{k = 0}^{rm-1}\left(1-\L^{k+2-rm/2}t^m\right)^{-1}.
\ee
\end{prop}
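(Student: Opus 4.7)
The plan is to follow the strategy of Behrend--Bryan--Szendrői \cite{BBS} for the rank-$1$ case, adapted to arbitrary rank $r$.

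First I would exploit the $\C^*$-scaling action on $U_{r,n}$ given by $\lambda \cdot (A,B,C,v_1,\dots,v_r) = (\lambda A, \lambda B, \lambda C, v_1,\dots,v_r)$. This commutes with the $\GL_n$-action, hence descends to the smooth quotient $\Quot^n_r$, and renders $f_{r,n} = \Tr A[B,C]$ a character of weight $3$. By Denef--Loeser (cf.~Lemma \ref{Phi_No_Monodromy}) the motivic vanishing cycle $[\phi_{f_{r,n}}]$ is then monodromy-free, and moreover can be computed as a difference of (shifted) motives of fibres of $f_{r,n}$, so that \eqref{Virtual_Motive_Quot_Affine} reduces to a motivic count of cyclic representations of the framed three-loop quiver subject to the linear condition $\Tr A[B,C] = c$ for $c \in \{0, 1\}$.

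Next I would compute these motives by stratifying $U_{r,n}$ according to the cyclic subspaces generated by iterated application of $A,B,C$ to the framing vectors. This yields a recursive expression for $[U_{r,n}]$; descending to $\Quot^n_r = U_{r,n}/\GL_n$ via the free $\GL_n$-action produces the $\L$-powers that absorb the $\L^{-(2n^2+rn)/2}$ normalisation in \eqref{Virtual_Motive_Quot_Affine}. Summing over $n$ and packaging the virtual motives into $\mathsf Z_r(\A^3,t)$, I would use the power structure on $\mathcal M_\C$ from Section \ref{sec:Power_structures} to recognise the series as a plethystic exponential: the cyclicity constraint defining $U_{r,n}$ is precisely the hypothesis needed for the normalised local motives to assemble into an $\Exp$-expansion of a simpler rational generating function.

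Applying the identity
\[
\Exp\!\left(\frac{a\,t}{1-t}\right) = \prod_{m \geq 1}(1-a\,t^m)^{-1}
\]
factor-by-factor should then convert the exponential into the stated double product \eqref{id:mot}. The main obstacle is this combinatorial matching: already in the rank-$1$ case of \cite{BBS} the plethystic manipulations are delicate, and for general $r$ one must verify that the inner index range grows from $0 \leq k \leq m-1$ to $0 \leq k \leq rm-1$ and that the $\L$-exponent shifts from $k+2-m/2$ to $k+2-rm/2$ in a manner consistent with the $r$ extra framing vectors. Specialising to $r=1$ must recover the Behrend--Bryan--Szendrői formula verbatim, which provides the natural sanity check.
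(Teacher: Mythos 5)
The paper does not actually prove this proposition in the text: it is imported from \cite{ThesisR} with the remark that the computation ``follows step by step'' the rank-$1$ calculation of \cite{BBS}, so your outline matches the intended route. But as written the proposal has a genuine gap at the computational core. Reducing $[\phi_{f_{r,n}}]$ to $[f_{r,n}^{-1}(1)]-[f_{r,n}^{-1}(0)]$ is fine, although your single $\G_m$ acts on $f_{r,n}$ with weight $3$, which is not a primitive character; to invoke \cite[Thm.~B1]{BBS} one needs the full torus $\G_m^3$ rescaling $A,B,C$ separately, with the primitive character $t_1t_2t_3$ and circle-compactness of the diagonal — this is exactly how Lemma \ref{Phi_No_Monodromy} is phrased. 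More seriously, your plan never says how the classes of the two fibres are actually computed. The two ingredients that make the BBS argument work, and which are absent here, are: (i) $f_{r,n}=\Tr A[B,C]$ is \emph{linear in $A$}, so over the locus where $[B,C]\neq 0$ the fibres over $0$ and over $1$ are parallel affine subspaces with equal classes, and the difference $[f_{r,n}^{-1}(0)]-[f_{r,n}^{-1}(1)]$ localises on the commuting locus $\{[B,C]=0\}$; and (ii) the Feit--Fine formula, which expresses the generating function of the motives of the commuting varieties, divided by $[\GL_n]$, as an explicit infinite product. Without these, the ``recursive expression for $[U_{r,n}]$'' has no closed form to recurse towards.

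Relatedly, the stratification by the cyclic span of the framing vectors is not what feeds into the power structure; its role is to relate the stable locus $U_{r,n}$ to the full representation space. After dividing by $[\GL_n]$ it yields the factorisation (all framed representations) $=$ (cyclic framed representations) $\times$ (all unframed representations) at the level of generating functions, and the finite inner product $\prod_{k=0}^{rm-1}$ in \eqref{id:mot} arises precisely as the ratio of two infinite Feit--Fine-type products, one with the $r$ framing vectors and one without. The power structure and $\Exp$ play no role in the $\A^3$ computation itself; they enter only in Theorem \ref{Gen_Function_Affine} and Definition \ref{Def:Virtual_Motive_Any_Threefold}, when passing from the punctual contributions to general $Y$. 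One must also justify manipulating the classes $[\,\cdot\,]/[\GL_n]$, e.g.\ by checking that each stratum is a Zariski-locally trivial $\GL_n$-quotient, since $[\GL_n]$ is not invertible in $\mathcal M_{\C}$. In short: right strategy and correct sanity checks, but the proposal omits the identities that actually produce the right-hand side of \eqref{id:mot}.
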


A new proof of Proposition \ref{Prop:Motivic_Affine}, using wall-crossing for framed quiver representations, appeared recently in \cite{refinedDT_asymptotics}. 
We next compute (cf.~Corollary \ref{Prop:Motivic_Strata}) the motive \eqref{Virtual_Motive_Quot_Affine} and we show it is determined, via the power structure (cf.~Section \ref{sec:Power_structures}), by the virtual motivic contributions of the ``punctual strata'', just as in the rank $1$ case --- see \cite[Section 3]{BBS} and \cite[Section 3]{DavisonR}. This will allow us to define a virtual motive for all pairs $(Y,F)$ where $Y$ is a smooth quasi-projective $3$-fold and $F$ is a rank $r$ vector bundle on $Y$.

\subsubsection{The virtual motive of the Quot scheme of $\A^3$}
Let us fix $r\geq 1$ and for convenience let us shorten $\mathrm Q_{r,n} = \Quot_{\A^3}(\O^{\oplus r},n)$.
Consider the Quot-to-Chow morphism
\[
\sigma_n\colon \mathrm Q_{r,n} \to \Sym^n\A^3.
\]

\begin{lemma}\label{Phi_No_Monodromy}
The absolute motivic vanishing cycle $\phi_{f_{r,n}}$ satisfies the relation
\[
\left[\phi_{f_{r,n}}\right] = \left[f_{r,n}^{-1}(1)\right] - \left[f_{r,n}^{-1}(0)\right] \in \mathcal M_{\C},
\]
and the direct image along $\sigma_n$ of the motivic vanishing cycle is monodromy-free,
\[
\sigma_{n\,!\,}\left[\phi_{f_{r,n}}\right]_{\mathrm Q_{r,n}} \in \mathcal M_{\Sym^n\A^3} \subset \mathcal M^{\hat\mu}_{\Sym^n\A^3}.
\]
\end{lemma}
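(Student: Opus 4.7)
The proof naturally splits into the two claimed statements, but both are consequences of a single additional piece of structure on $(\Quot^n_r, f_{r,n})$: a $\mathbb G_m$-action making $f_{r,n}$ homogeneous of strictly positive weight.

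For the first identity, I would exhibit the scaling action on $U_{r,n}$ given by $t\cdot(A,B,C,v_1,\ldots,v_r)=(tA,tB,tC,v_1,\ldots,v_r)$. This commutes with the $\GL_n$-action, so descends to a $\mathbb G_m$-action on $\Quot^n_r$, and a direct computation shows $f_{r,n}(t\cdot -)=t^3 f_{r,n}$, so $f_{r,n}$ is homogeneous of weight $3>0$. Now invoke the standard motivic result (due to Denef--Loeser, and stated e.g.\ as \cite[Prop.~1.11]{BBS}): for a regular function on a smooth variety admitting a $\mathbb G_m$-action of strictly positive weight, the absolute motivic vanishing cycle has trivial monodromy and equals $[f^{-1}(1)]-[f^{-1}(0)]$ in $\mathcal M_{\C}$. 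Applied to $f_{r,n}$ on $\Quot^n_r$, this yields the first displayed identity.

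For the relative monodromy-freeness after push-forward to $\Sym^n\A^3$, the above $\mathbb G_m$-action is not enough because it does not preserve the fibers of $\sigma_n$: it scales the support of the $0$-cycle. The extra ingredient is the translation action of $\A^3$ on $U_{r,n}$ by $(x_0,y_0,z_0)\cdot(A,B,C,v)=(A+x_0 I, B+y_0 I, C+z_0 I, v)$, which commutes with $\GL_n$ and descends to $\Quot^n_r$. One verifies by a short computation (using $\Tr[B,C]=0$) that $f_{r,n}$ is $\A^3$-invariant and that $\sigma_n$ is equivariant for the usual diagonal translation on $\Sym^n\A^3$. Conjugating the scaling action by translations therefore produces, for every $p\in\A^3$, a $\mathbb G_m$-action on $\Quot^n_r$ that fixes $\sigma_n^{-1}(n[p])$ setwise and makes $f_{r,n}$ homogeneous of weight $3$ on every fiber.

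To exploit this fiberwise structure I would stratify $\Sym^n\A^3$ by partition type $\alpha\vdash n$ and, on each stratum, use the \'etale-local product decomposition of $\sigma_n^{-1}(\sum_i m_i[p_i])$ as a product of punctual Quot schemes at the support points. A Thom--Sebastiani type statement for motivic vanishing cycles reduces the relative vanishing cycle on each stratum to an exterior product of punctual contributions. Each of these is a motivic vanishing cycle of a function that, after translating the support point to the origin, is weight-$3$ homogeneous for the scaling $\mathbb G_m$-action; the first part of the lemma shows it is monodromy-free. Gluing across strata via the $\A^3$-translation action concludes that $\sigma_{n!}[\phi_{f_{r,n}}]_{\mathrm Q_{r,n}}$ lies in the monodromy-free subring $\mathcal M_{\Sym^n\A^3}$.

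\textbf{Main obstacle.} The delicate step is the stratum-wise reduction: one needs a Thom--Sebastiani product formula for the relative motivic vanishing cycle of $f_{r,n}$ compatible with the decomposition of fibers of $\sigma_n$ into products of punctual Quot schemes, and then has to glue these descriptions across strata using only the $\A^3$-translation and scaling symmetries. Once this product-with-translation picture is in place the monodromy-freeness is essentially automatic from the first half; verifying the compatibility rigorously at the level of motivic weights in $\mathcal M^{\hat\mu}_{\Sym^n\A^3}$ is where the work lies.
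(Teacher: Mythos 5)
There is a genuine gap in the first step. The ``standard result'' you invoke --- monodromy-freeness for any $\G_m$-action of \emph{strictly positive} weight --- is false: for $f(x)=x^2$ on $\A^1$ with the standard scaling (weight $2>0$), the motivic nearby fibre at the origin is $[\mu_2]$ with its regular $\mu_2$-action, so $[\phi_f]$ has nontrivial monodromy and does not equal $[f^{-1}(1)]-[f^{-1}(0)]$. The hypothesis of \cite[Prop.~1.11]{BBS} (and of Theorem~B.1 there) is equivariance with respect to a \emph{primitive} character, i.e.~weight one; your scaling action gives $f_{r,n}$ weight $3$, so the cited result does not apply to it. The correct argument --- and the one the paper uses --- is to act by the full torus $\G_m^3$ via $(t_1,t_2,t_3)\cdot(A,B,C,v)=(t_1A,t_2B,t_3C,v)$, for which $f_{r,n}$ is equivariant with respect to the primitive character $\chi(t)=t_1t_2t_3$; your diagonal scaling then plays the role of the circle-compact one-parameter subgroup required by \cite[Thm.~B1]{BBS}. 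So the ingredient you found is the right one, but it must be fed into the torus version of the theorem, not the naive weight-$d$ statement.

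For the second claim your route (translations, stratification of $\Sym^n\A^3$ by partition type, \'etale-local factorisation, Thom--Sebastiani, gluing) is both much harder than necessary and, as you admit, not actually carried out at its crucial step; note also that motivic Thom--Sebastiani involves a convolution product, so monodromy-freeness would not be ``essentially automatic'' even granting the factorisation. The paper's argument is short: the relative part of \cite[Thm.~B1]{BBS} (which requires the additional input that $f_{r,n}^{-1}(0)$ is a \emph{reduced} hypersurface, checked by observing that $\Tr A[B,C]$ is reduced on $\Rep_n(L_3)$) gives monodromy-freeness of the direct image of $[\phi_{f_{r,n}}]_{\mathrm Q_{r,n}}$ along the affinisation map $a\colon \mathrm Q_{r,n}\to Z$; since $\Sym^n\A^3$ is affine, $\sigma_n$ factors through $a$, and monodromy-freeness of $\sigma_{n!}[\phi_{f_{r,n}}]_{\mathrm Q_{r,n}}$ follows at once. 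You should replace your stratification plan by this factorisation-through-the-affinisation argument, and add the reducedness check, which your proposal omits entirely.
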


\begin{proof}
Let $\mathbb T = \G_m^3$ be the $3$-dimensional torus. The function $f_{r,n}$ is equivariant with respect to the primitive character $\chi(t) = t_1t_2t_3$, and a standard argument \cite{BBS} shows that the action of the diagonal subgroup $\G_m\subset \mathbb T$ is circle compact. Therefore the formula for $[\phi_{f_{r,n}}]$ follows from \cite[Thm.~B1]{BBS}. 

Let $L_3$ be the 3-loop quiver, i.e.~the quiver obtained from the one in Figure \ref{L3quiver} by removing all framings $\infty\to 0$. The function $(A,B,C) \mapsto \Tr A[B,C]$ on the space $\Rep_n(L_3)$ of $n$-dimensional representations of $L_3$ is reduced. This implies that $f_{r,n}^{-1}(0)\subset \Quot^n_r$ is a reduced hypersurface. Let $a\colon \mathrm Q_{r,n}\to Z$ be the affinisation of the Quot scheme. Then, again by \cite[Thm.~B1]{BBS}, the direct image $a_{\,!\,}[\phi_{f_{r,n}}]_{\mathrm Q_{r,n}}$ is monodromy-free. Since $\Sym^n\A^3$ is affine, $\sigma_n$ factors through $a$, thus $\sigma_{n\,!\,}[\phi_{f_{r,n}}]_{\mathrm Q_{r,n}}$ is also monodromy-free.
\end{proof}

The \emph{punctual Quot scheme} $\Quot_{\A^3}(\O^{\oplus r},n)_0\subset \mathrm Q_{r,n}$ is the locus of quotients $\O_{\A^3}^r \onto Q$ such that $Q$ is entirely supported at the origin $0\in \A^3$. It is the fibre of $\sigma_n$
over the point $n\cdot 0 \in \Sym^n_{(n)}\A^3$. We use the special notation
\[
\mathsf P_{r,n} = \left[\Quot_{\A^3}(\O^{\oplus r},n)_0\right]_{\vir}\in \mathcal M_{\C}
\]
for its virtual motivic contribution (see Notation \ref{notation:vir} for the definition of the right hand side), and we form the generating function
\[
\mathsf P_r(t) = \sum_{n\geq 0}\,\mathsf P_{r,n}\cdot t^n \in \mathcal M_{\C}\llbracket t\rrbracket.
\]
Define motivic weights
\[
\Omega_{r,n} \in \mathcal M_{\C}
\]
by the identity 
\be\label{Classes_Omega}
\mathsf P_r((-1)^rt) = \prExp \left(\sum_{n>0}\Omega_{r,n}\cdot t^n\right) \in \mathcal M_{\mathbb C}\llbracket t \rrbracket.
\ee

\begin{theorem}\label{Gen_Function_Affine}
There is an identity
\[
\mathsf Z_r(\A^3,(-1)^rt) = \mathsf P_r((-1)^rt)^{\L^3} \in \mathcal M_{\C}\llbracket t \rrbracket.
\]
\end{theorem}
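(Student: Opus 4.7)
\smallbreak
\noindent\textbf{Strategy.} The plan is to adapt the rank-one Hilbert scheme computation of \cite[Prop.~3.2]{BBS} to arbitrary rank. The basic idea is to stratify $\mathrm Q_{r,n}=\Quot_{\A^3}(\O^{\oplus r},n)$ via the Quot-to-Chow morphism $\sigma_n\colon \mathrm Q_{r,n}\to \Sym^n\A^3$ according to the partition type of the support cycle, then decompose the motivic vanishing cycle $[\phi_{f_{r,n}}]$ stratumwise via motivic Thom--Sebastiani. After accounting for the normalisation $\L^{-(2n^2+rn)/2}$ and the sign $(-1)^{rn}$ in the virtual motive, the resulting sum over partitions will match exactly the defining formula \eqref{eqn:power_formula} of the power structure evaluated at $A(t)=\mathsf P_r((-1)^rt)$ with exponent $[\A^3]=\L^3$.

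\smallbreak
\noindent\textbf{Stratification.} For each partition $\alpha=(1^{\alpha_1}\cdots s^{\alpha_s})\vdash n$, let $\Sym^n_\alpha\A^3$ be the locally closed stratum of $0$-cycles having exactly $\alpha_i$ distinct support points of multiplicity $i$. Using translation invariance of $\A^3$, the preimage $\mathrm Q_{r,n,\alpha}=\sigma_n^{-1}(\Sym^n_\alpha\A^3)$ can be identified with
\[
\left(\Bigl(\prod_i (\A^3)^{\alpha_i}\setminus \Delta\Bigr)\times \prod_i \Quot_{\A^3}(\O^{\oplus r},i)_0^{\alpha_i}\right)\Big/ G_\alpha,
\]
where $G_\alpha=\prod_i\mathfrak S_{\alpha_i}$ permutes the factors of equal multiplicity. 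On each such stratum, whenever the support of a quotient $\O^{\oplus r}_{\A^3}\twoheadrightarrow Q$ decomposes into disjoint clusters, the associated stable quiver data $(A,B,C,v_1,\ldots,v_r)$ splits into block-diagonal form relative to the direct sum decomposition of $Q$ as a $\C[x,y,z]$-module. Since $\Tr A[B,C]$ is additive on block-diagonal matrices and translation in $\A^3$ acts by $A\mapsto A+\lambda I$ (and similarly for $B,C$) without changing the trace of the commutator, the potential $f_{r,n}$ restricts on $\mathrm Q_{r,n,\alpha}$ to the sum of the punctual potentials $f_{r,i}$ on the punctual Quot factors, plus a contribution depending only trivially on the position data.

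\smallbreak
\noindent\textbf{Thom--Sebastiani and the power structure formula.} The motivic Thom--Sebastiani theorem then decomposes the restriction of $[\phi_{f_{r,n}}]_{\mathrm Q_{r,n}}$ to $\mathrm Q_{r,n,\alpha}$ as the exterior product of the punctual vanishing cycles $[\phi_{f_{r,i}}]_{\Quot_{\A^3}(\O^{\oplus r},i)_0}$ over the factors, times the trivial motive on the configuration component $\prod_i (\A^3)^{\alpha_i}\setminus \Delta$. The monodromy-freeness established in Lemma \ref{Phi_No_Monodromy} (which extends to the punctual pieces by pulling back along $\sigma_n$) guarantees that the Thom--Sebastiani identification is available within the plain motivic ring $\mathcal M_{\C}$, without leaving the monodromy-free subring. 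Summing over $\alpha\vdash n$ with the correct $G_\alpha$-equivariance, tracking the normalising factors $\L^{-(2n^2+rn)/2}$, and collecting terms, the coefficient of $t^n$ in $\mathsf Z_r(\A^3,(-1)^rt)$ becomes exactly
\[
\sum_{\alpha\vdash n}\pi_{G_\alpha}\Biggl(\Biggl[\prod_i (\A^3)^{\alpha_i}\setminus \Delta\Biggr]\cdot \prod_i\bigl((-1)^{ri}\mathsf P_{r,i}\bigr)^{\otimes \alpha_i}\Biggr),
\]
which is precisely formula \eqref{eqn:power_formula} evaluated on $A(t)=\mathsf P_r((-1)^rt)$ with effective exponent $[\A^3]=\L^3$. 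This yields the claimed identity $\mathsf Z_r(\A^3,(-1)^rt)=\mathsf P_r((-1)^rt)^{\L^3}$.

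\smallbreak
\noindent\textbf{Main obstacle.} The delicate technical point is the Thom--Sebastiani step: one must verify that the block decomposition of stable quiver data on each stratum is compatible both with the GIT quotient defining $\Quot^n_r$ and with the critical scheme structure, so that the factorisation of $f_{r,n}$ into punctual summands is realised at the level of critical loci and their motivic vanishing cycles, not only set-theoretically. In practice, stability of a framed representation with block-diagonal $(A,B,C)$ is equivalent to stability of each block with respect to its assigned framing vectors, and the cross-blocks of $A[B,C]$ vanish, which yields the required factorisation. Once this is in place, the remainder is bookkeeping along the lines of \cite[Prop.~3.2]{BBS} and \cite[Section 3]{DavisonR}.
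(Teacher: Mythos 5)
Your stratification and Thom--Sebastiani argument reproduces the first half of the paper's proof (which outsources exactly this step to the analysis of \cite[Sec.~3]{DavisonR}), but there is a genuine gap at the very end. What your argument actually establishes is the identity
\[
\mathsf Z_r(\A^3,(-1)^rt) = \mathsf P_r((-1)^rt)^{\L^3}_{\diamond},
\]
where the right-hand side is the \emph{preliminary} power structure given by formula \eqref{eqn:power_formula}. That formula computes the honest power structure $A(t)^{[X]}$ only when the coefficients $A_i$ are effective; for non-effective coefficients the power structure is defined by a different recipe (writing $A(t)\cdot B(t)=C(t)$ with $B,C$ effective and taking a quotient). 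You note that the exponent $[\A^3]=\L^3$ is effective, but you never address the effectiveness of the coefficients $(-1)^{ri}\mathsf P_{r,i}$, and this is not automatic: they are defined via the motivic vanishing cycle and the normalisation $\L^{-(2i^2+ri)/2}$, so a priori they are arbitrary elements of $\mathcal M_{\C}$. Identifying your partition sum with $\mathsf P_r((-1)^rt)^{\L^3}$ therefore begs the question.

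The paper closes this gap with a computation you do not perform: starting from the explicit evaluation of $\mathsf Z_r(\A^3,t)$ in Proposition \ref{Prop:Motivic_Affine}, it rewrites the generating function as $\prExp$ of an explicit class, deduces the closed formula \eqref{Punctual_Signed} for $\mathsf P_r((-1)^rt)$, and then extracts (using injectivity of $\prExp$) the classes $\Omega_{r,n}=(-\L^{\frac{1}{2}})^{-rn-2}[\P^{rn-1}]$, which are visibly effective; effectiveness of the signed punctual motives follows because $\prExp$ preserves effectivity, and only then can the decoration `$\diamond$' be removed. A secondary, smaller point: your appeal to Lemma \ref{Phi_No_Monodromy} to run Thom--Sebastiani inside $\mathcal M_{\C}$ is the right idea (on monodromy-free classes the convolution product agrees with the ordinary one), but you should make precise that the punctual classes are monodromy-free because they are restrictions of the monodromy-free relative class $\sigma_{n!}[\phi_{f_{r,n}}]$ to a point of $\Sym^n\A^3$, rather than asserting it by ``pulling back along $\sigma_n$''.
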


\begin{proof}
The same analysis of \cite[Sec.~3]{DavisonR} shows that the relative virtual motives of $\mathrm Q_{r,n}$, viewed as relative classes over $\Sym(\A^3)$, are generated under $\Exp_\cup$ by the classes $\Omega_{r,n}$ defined in \eqref{Classes_Omega}, extended by the small diagonal. In other words, if $\Delta_n\colon \A^3\to \Sym^n\A^3$ denotes the small diagonal, a stratification argument combined with \cite[Prop.~1.12]{DavisonR} yields an identity
\be\label{Punctual_Exp}
\sum_{n \geq 0}\,(-1)^{rn}\left[\mathrm Q_{r,n} \xrightarrow{\sigma_n} \Sym^n\A^3\right]_{\vir} = \prExp_\cup\left(\sum_{n>0}  \Delta_{n\,!\,}\bigl(\Omega_{r,n}\boxtimes\bigl[\A^3 \xrightarrow{\id} \A^3\bigr]\bigr)\right)
\ee
in $\mathcal M_{\Sym(\A^3)}$.

Consider the map $\#\colon \Sym(\A^3) \to \mathbb N$ sending $\Sym^n\A^3$ to the point $n$. Its direct image $\#_!$ is described in \eqref{Power_Series_Sym}. By applying $\#_!$ to both sides of \eqref{Punctual_Exp}, and using \cite[Prop.~1.12]{DavisonR} along with Equation \eqref{Classes_Omega}, we deduce the identity
\be\label{diamond_1}
\mathsf Z_r(\A^3,(-1)^rt) = \mathsf P_r((-1)^rt)^{\L^3}_{\diamond}.
\ee
Next, we prove that $\Omega_{r,n}$ is effective for all $r$, $n$.
A straightforward calculation along the lines of \cite[Thm.~4.3]{BBS} allows us to verify, starting from \eqref{id:mot}, that
\[
\mathsf Z_r(\A^3,(-1)^rt) = 
\prExp\left(\frac{(-1)^rt\mathbb L^{\frac{3}{2}}}{\bigl(1-(-\mathbb L^{-\frac{1}{2}})^rt\bigr)\bigl(1-(-\mathbb L^{\frac{1}{2}})^rt\bigr)}\frac{\mathbb L^{-\frac{r}{2}}-\mathbb L^{\frac{r}{2}}}{\mathbb L^{-\frac{1}{2}}-\mathbb L^{\frac{1}{2}}}\right).
\]
By Equation \eqref{diamond_1}, this gives
\be\label{Punctual_Signed}
\mathsf{P}_r((-1)^rt) = \prExp\left(\frac{(-1)^rt\mathbb L^{-\frac{3}{2}}}{\bigl(1-(-\mathbb L^{-\frac{1}{2}})^rt\bigr)\bigl(1-(-\mathbb L^{\frac{1}{2}})^rt\bigr)}\bigl[\P^{r-1}\bigr]_{\vir}\right).
\ee
Combining \eqref{Classes_Omega} with the injectivity of $\prExp$ (see \cite[Lemma 1.11]{DavisonR}), an elementary comparison shows that
\be
\begin{split}\label{Omega_Explicit}
\Omega_{r,n} &= (-1)^{rn}\L^{-\frac{3}{2}}\cdot \L^{\frac{r(1-n)}{2}}\frac{\L^{rn}-1}{\L^r-1}\bigl[\P^{r-1}\bigr]_{\vir} \\
&=(-\L^{\frac{1}{2}})^{-rn-2} \bigl[\P^{rn-1}\bigr],
\end{split}
\ee
which belongs to $\mathcal M_{\C}^{\eff} \subset \mathcal M_{\C}$ for every $r$ and $n$. It follows that the classes $(-1)^{rn}\mathsf P_{r,n}$ are effective (because the plethystic exponential preserves effectiveness). This implies that 
\[
\mathsf P_r((-1)^rt)^{\L^3}_{\diamond}=\mathsf P_r((-1)^rt)^{\L^3},
\]
so the result follows from Equation \eqref{diamond_1}.
\end{proof}

\begin{remark}
Equation \eqref{Punctual_Exp} is the analogue of \cite[Thm.~3.17]{DavisonR}. The argument needed here is actually easier (and more similar to the setup of \cite{BBS}) than the one in \cite{DavisonR}. Indeed, in the present situation, there is only \emph{one} punctual contribution, whereas in \cite{DavisonR} two types of punctual contributions had to be considered.
\end{remark}

\begin{remark}
For $r=1$ we recover the effective classes
\[
\Omega_{1,n} = (-1)^n\L^{-\frac{3}{2}}\frac{\L^{\frac{n}{2}}-\L^{-\frac{n}{2}}}{\L^{\frac{1}{2}}-\L^{-\frac{1}{2}}}
\]
determining, via the identity  $\Exp\bigl(\sum_{n\geq 1}\Omega_{1,n}\cdot t^n\bigr) = \sum_{n\geq 0}\,[\Hilb^n(\A^3)_0]_{\vir}(-t)^n$, the virtual motives of the punctual Hilbert scheme of $\mathbb A^3$ defined in \cite{BBS}.
\end{remark}

\begin{remark}
Since the classes in Equation \eqref{Omega_Explicit} are effective, the `$\diamond$' decoration in Equation \eqref{Classes_Omega} can be removed, and we obtain the identity
\begin{align*}
    \mathsf P_r((-1)^rt) &= \Exp\left(\sum_{n>0}\Omega_{r,n}\cdot t^n\right)\\
&=\Exp \left(\sum_{n>0}(-\L^{\frac{1}{2}})^{-rn-2} \bigl[\P^{rn-1}\bigr]\cdot t^n\right).
\end{align*}
This relation can be viewed as the local \emph{motivic} analogue of the enumerative identity
\[
\mathsf M((-1)^rq)^{r\chi(Y)}=\exp\left(\sum_{n>0}(-1)^{rn-1}rn\cdot \mathrm N^Y_{n,0} q^n\right)
\]
where, for a projective $3$-fold $Y$, the number $\mathrm N^Y_{n,0} \in \Q$ is the virtual count of semistable sheaves $E \in \Coh(Y)$ supported in dimension $0$ and with $\chi(E)=n$.
\end{remark}

\begin{corollary}\label{Prop:Motivic_Strata}
For all $r\geq 1$ and $n\geq 0$, there is an identity 
\[
\left[\Quot_{\A^3}(\O^{\oplus r},n)\right]_{\vir} = \sum_{\alpha\vdash n}\pi_{G_\alpha}\left(\left[\prod_i\, (\A^3)^{\alpha_i}\setminus\Delta\right] \cdot \prod_i \mathsf P_{r,i}^{\alpha_i} \right) \in \mathcal M_{\C}.
\]
\end{corollary}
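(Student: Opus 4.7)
The plan is to deduce the corollary directly from Theorem \ref{Gen_Function_Affine} by extracting the coefficient of $t^n$ and unpacking the explicit formula \eqref{eqn:power_formula} for the power structure.

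First I would observe that in the course of proving Theorem \ref{Gen_Function_Affine} it has already been established, via the explicit formula \eqref{Omega_Explicit} for $\Omega_{r,n}$ together with the fact that $\prExp$ preserves effectiveness, that the signed classes $(-1)^{rn}\mathsf P_{r,n}\in \mathcal M_{\C}^{\eff}$ are effective. Hence the series $\mathsf P_r((-1)^rt)$ lies in $1+t\mathcal M_{\C}^{\eff}\llbracket t\rrbracket$. Since $\L^3 = [\A^3]$ is also effective, the general recipe recalled in Section \ref{sec:Power_structures} reduces to the direct formula \eqref{eqn:power_formula}, so the `$\diamond$' decoration can be dropped and, setting $A_i = (-1)^{ri}\mathsf P_{r,i}$,
\[
\mathsf P_r((-1)^rt)^{\L^3} = 1+\sum_{n>0}\sum_{\alpha\vdash n}\pi_{G_\alpha}\Biggl(\Biggl[\prod_i (\A^3)^{\alpha_i}\setminus \Delta\Biggr]\cdot \prod_i A_i^{\otimes \alpha_i}\Biggr) t^n.
\]

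Next I would factor out the global sign. A short calculation gives $\prod_i((-1)^{ri})^{\alpha_i} = (-1)^{r\sum_i i\alpha_i} = (-1)^{rn}$ for every $\alpha\vdash n$, whence
\[
\prod_i A_i^{\otimes \alpha_i} = (-1)^{rn}\prod_i \mathsf P_{r,i}^{\otimes \alpha_i}.
\]
Comparing coefficients of $t^n$ on both sides of the identity $\mathsf Z_r(\A^3,(-1)^rt) = \mathsf P_r((-1)^rt)^{\L^3}$ supplied by Theorem \ref{Gen_Function_Affine}, one obtains
\[
(-1)^{rn}\bigl[\Quot_{\A^3}(\O^{\oplus r},n)\bigr]_{\vir} = (-1)^{rn}\sum_{\alpha\vdash n}\pi_{G_\alpha}\Biggl(\Biggl[\prod_i (\A^3)^{\alpha_i}\setminus \Delta\Biggr]\cdot \prod_i \mathsf P_{r,i}^{\otimes \alpha_i}\Biggr),
\]
and cancelling the common factor $(-1)^{rn}$ gives exactly the claimed identity (writing $\mathsf P_{r,i}^{\alpha_i}$ as shorthand for the equivariant tensor power $\mathsf P_{r,i}^{\otimes \alpha_i}$ supplied by \eqref{powermap}).

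The only genuinely nontrivial point in the argument is the agreement of the `$\diamond$'-version of the power structure with the actual power structure, which relies on the effectiveness of the signed punctual contributions; that has already been proven in Theorem \ref{Gen_Function_Affine}. Everything else is a direct unpacking of the defining formula \eqref{eqn:power_formula} and routine sign-bookkeeping, so I do not anticipate further obstacles.
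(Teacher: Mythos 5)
Your proposal is correct and follows essentially the same route as the paper: Corollary \ref{Prop:Motivic_Strata} is deduced from Theorem \ref{Gen_Function_Affine} by invoking the effectiveness of the signed punctual classes $(-1)^{ri}\mathsf P_{r,i}$ (established in that theorem's proof) so that the explicit formula \eqref{eqn:power_formula} applies, and then extracting the coefficient of $t^n$. The paper leaves the sign cancellation $\prod_i((-1)^{ri})^{\alpha_i}=(-1)^{rn}$ implicit; you have simply made it explicit.
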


\begin{proof}
By the proof of Theorem \ref{Gen_Function_Affine}, the motives $(-1)^{ri}\mathsf P_{r,i}$ are effective. The result follows directly from the theorem and the power structure formula for an effective power series, cf.~\eqref{eqn:power_formula}.
\end{proof}

\subsection{Virtual motives for arbitrary $3$-folds}

Let $Y$ be a smooth quasi-projective $3$-fold, $F$ a vector bundle of rank $r$.

\begin{definition}\label{Def:Virtual_Motive_Any_Threefold}
We define the motivic weights $[\Quot_Y(F,n)]_{\vir} \in \mathcal M_\C$ by the identity
\[
\sum_{n\geq 0}\,\bigl[\Quot_Y(F,n)\bigr]_{\vir}((-1)^rt)^n = \mathsf P_r((-1)^rt)^{[Y]}.
\]
\end{definition}

Note that for $Y=\A^3$ this definition reconstruct the virtual motive of $\Quot_{\A^3}(\O^{\oplus r},n)$ by Theorem \ref{Gen_Function_Affine}.

Let us form the motivic partition function
\[
\mathsf Z_r(Y,t) = \sum_{n\geq 0}\,\bigl[\Quot_Y(F,n)\bigr]_{\vir}\cdot t^n \in \mathcal M_{\C}\llbracket t \rrbracket.
\]

\begin{lemma}
The motivic weight $[\Quot_Y(F,n)]_{\vir}$ is a virtual motive for $\Quot_Y(F,n)$.
\end{lemma}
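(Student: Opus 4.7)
The plan is to show $\chi[\Quot_Y(F,n)]_{\vir} = \widetilde\chi(\Quot_Y(F,n))$ by applying the Euler characteristic homomorphism $\chi\colon \mathcal M_\C \to \Z$ of \eqref{eulerchi} to the defining identity of Definition \ref{Def:Virtual_Motive_Any_Threefold} and matching the result coefficient-by-coefficient with the series produced by Theorem \ref{Thm:QuotVirtualChi}. The argument breaks into three steps: a compatibility lemma, an evaluation of $\chi(\mathsf P_r((-1)^rt))$, and a formal substitution.

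The first ingredient is the compatibility of $\chi$ with the power structure: for any $A(t) \in 1+t\mathcal M_\C\llbracket t\rrbracket$ and any class $[W] \in K_0(\Var_\C)$, we have
\[
\chi(A(t)^{[W]}) = \chi(A(t))^{\chi(W)},
\]
where the right-hand side denotes the ordinary integer power in $\Z\llbracket t\rrbracket$. This follows directly from the formula \eqref{eqn:power_formula} together with the fact that Euler characteristic is additive over stratifications, multiplicative over products, and invariant under passage to orbit spaces by finite group actions; the $\diamond$-convention reduces the general case to the effective one via $A^{[W]} \cdot B^{[W]} = (AB)^{[W]}$.

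The second ingredient is the evaluation of $\chi(\mathsf P_r((-1)^rt))$. Specialising Definition \ref{Def:Virtual_Motive_Any_Threefold} to $Y = \A^3$ and using Theorem \ref{Gen_Function_Affine} shows that the resulting class agrees with the critical-locus virtual motive \eqref{Virtual_Motive_Quot_Affine} of $\Quot_{\A^3}(\O^{\oplus r},n)$, for which Behrend's theorem (cf.~\cite[Prop.~2.16]{BBS}) already guarantees $\chi[\Quot_{\A^3}(\O^{\oplus r},n)]_{\vir} = \widetilde\chi(\Quot_{\A^3}(\O^{\oplus r},n))$. Combining this with Theorem \ref{Thm:QuotVirtualChi} at $Y=\A^3$ (noting $\chi(\A^3)=1$) and with Theorem \ref{Gen_Function_Affine} (noting $\chi(\L^3)=1$), one obtains
\[
\chi(\mathsf P_r((-1)^rt)) \;=\; \chi\mathsf Z_r(\A^3,(-1)^rt) \;=\; \mathsf M(t)^r.
\]

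For arbitrary $Y$, applying $\chi$ to both sides of Definition \ref{Def:Virtual_Motive_Any_Threefold} and using the two ingredients above yields
\[
\sum_{n\geq 0}\chi[\Quot_Y(F,n)]_{\vir}\,((-1)^rt)^n \;=\; \chi(\mathsf P_r((-1)^rt))^{\chi(Y)} \;=\; \mathsf M(t)^{r\chi(Y)}.
\]
The substitution $t \mapsto (-1)^r t$ turns the left-hand side into $\sum_n \chi[\Quot_Y(F,n)]_{\vir}\,t^n$ and the right-hand side into $\mathsf M((-1)^r t)^{r\chi(Y)}$, which is precisely the series of Theorem \ref{Thm:QuotVirtualChi}. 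Comparing coefficients of $t^n$ completes the proof. The only substantial point is the power-structure compatibility at the start, which is standard; the rest is formal manipulation of generating series.
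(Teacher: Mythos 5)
Your proof is correct and follows essentially the same route as the paper: apply $\chi$ to the defining identity, use compatibility of $\chi$ with the power structure to reduce to $\chi(\mathsf P_r((-1)^rt)) = \mathsf M(t)^r$ (obtained from the critical-locus virtual motive on $\A^3$ via Theorem \ref{Gen_Function_Affine} and Theorem \ref{Thm:QuotVirtualChi}), and then substitute $t\mapsto(-1)^rt$ to match Theorem \ref{Thm:QuotVirtualChi}. You merely make explicit the power-structure compatibility that the paper uses without comment.
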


\begin{proof}
The chain of equalities
\[
\chi \mathsf P_r((-1)^rt) = \chi \mathsf P_r((-1)^rt)^{\L^3} = \chi \mathsf Z_r(\A^3,(-1)^rt) = \mathsf M(t)^r
\]
implies that
\[
\chi \mathsf Z_r(Y,(-1)^rt) = \chi  \mathsf P_r((-1)^rt)^{[Y]} = (\chi \mathsf P_r((-1)^rt))^{\chi(Y)} = \mathsf M(t)^{r\chi(Y)}.
\]
The claim then follows by substituting $t\to (-1)^rt$ and comparing with Theorem \ref{Thm:QuotVirtualChi}.
\end{proof}

 We now derive a formula for $\mathsf Z_r(Y,(-1)^rt)$ in terms of the motivic exponential.


\begin{theorem}\label{Thm:Motivic_Partition_Function}
Let $Y$ be a smooth $3$-fold, $F$ a vector bundle of rank $r$. Then
\[
\mathsf Z_r(Y,(-1)^rt)=\Exp\left(
\frac{(-1)^rt\left[Y \times \P^{r-1}\right]_{\vir}}{\bigl(1-(-\mathbb L^{-\frac{1}{2}})^rt\bigr)\bigl(1-(-\mathbb L^{\frac{1}{2}})^rt\bigr) }\right).
\]
\end{theorem}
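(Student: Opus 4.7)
The plan is to reduce the statement to Equation \eqref{Punctual_Signed} via the defining relation of $[\Quot_Y(F,n)]_{\vir}$ and then apply the standard compatibility between the power structure on $\mathcal M_{\mathbb C}$ and the motivic exponential.

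First, by Definition \ref{Def:Virtual_Motive_Any_Threefold}, we have
\[
\mathsf Z_r(Y,(-1)^rt) = \mathsf P_r((-1)^rt)^{[Y]}.
\]
By Equation \eqref{Punctual_Signed}, combined with the effectiveness statement established in the proof of Theorem \ref{Gen_Function_Affine} (which allowed removal of the `$\diamond$' decoration), we may write
\[
\mathsf P_r((-1)^rt) = \Exp\left(\frac{(-1)^r t\,\mathbb L^{-\frac{3}{2}}\,[\P^{r-1}]_{\vir}}{\bigl(1-(-\mathbb L^{-\frac{1}{2}})^r t\bigr)\bigl(1-(-\mathbb L^{\frac{1}{2}})^r t\bigr)}\right).
\]

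Next, I would invoke the compatibility rule $\Exp(A)^{[Y]} = \Exp([Y]\cdot A)$ relating the power structure on $\mathcal M_{\mathbb C}$ to the motivic exponential. This is a standard property (used e.g.\ in \cite{BBS}), and it applies here because all the relevant classes in the argument of $\Exp$ are effective (after clearing denominators by expanding the geometric series in $t$). Applying this compatibility yields
\[
\mathsf P_r((-1)^rt)^{[Y]} = \Exp\left(\frac{(-1)^r t\,\mathbb L^{-\frac{3}{2}}\,[\P^{r-1}]_{\vir}\cdot [Y]}{\bigl(1-(-\mathbb L^{-\frac{1}{2}})^r t\bigr)\bigl(1-(-\mathbb L^{\frac{1}{2}})^r t\bigr)}\right).
\]

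Finally, I would identify the numerator with $(-1)^r t\,[Y\times \P^{r-1}]_{\vir}$. Since both $Y$ and $\P^{r-1}$ are smooth, Example \ref{ex:smooth_VM} gives $[\P^{r-1}]_{\vir} = \mathbb L^{-(r-1)/2}[\P^{r-1}]$ and $[Y\times \P^{r-1}]_{\vir} = \mathbb L^{-(r+2)/2}[Y][\P^{r-1}]$. A direct calculation shows
\[
\mathbb L^{-\frac{3}{2}}\,[\P^{r-1}]_{\vir}\cdot [Y] = \mathbb L^{-\frac{r+2}{2}}[Y][\P^{r-1}] = [Y\times \P^{r-1}]_{\vir},
\]
which completes the identification and yields the theorem.

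The only real subtlety is the compatibility step $\Exp(A)^{[Y]} = \Exp([Y]\cdot A)$: one should verify that the denominator $\bigl(1-(-\mathbb L^{-1/2})^r t\bigr)\bigl(1-(-\mathbb L^{1/2})^r t\bigr)$ doesn't interfere, i.e.\ that expanding as a power series in $t$ leaves one with effective coefficients, so that the identity $\Exp(A+B)=\Exp(A)\Exp(B)$ and the recursive formula for $(\cdot)^{[Y]}_{\pr}$ given in Section \ref{sec:Power_structures} apply termwise. This is exactly parallel to the rank one calculation carried out in \cite[Thm.~4.3]{BBS}, and no genuinely new ingredient is needed.
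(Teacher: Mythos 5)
Your proposal is correct and follows essentially the same route as the paper: start from Definition \ref{Def:Virtual_Motive_Any_Threefold}, substitute Formula \eqref{Punctual_Signed}, pass the exponent $[Y]$ inside $\Exp$ via the compatibility of the power structure with the motivic exponential, and conclude with $\L^{-\frac{3}{2}}[Y]=[Y]_{\vir}$ and $[Y]_{\vir}\cdot[\P^{r-1}]_{\vir}=[Y\times\P^{r-1}]_{\vir}$. The only difference is that you make explicit the step $\Exp(A)^{[Y]}=\Exp([Y]\cdot A)$ and the effectiveness check behind it, which the paper leaves implicit.
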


\begin{proof}
We have
\begin{align*}
\mathsf Z_r(Y,(-1)^rt) &= \mathsf P_r((-1)^rt)^{[Y]} \\
&= \Exp\left(\frac{(-1)^rt\mathbb L^{-\frac{3}{2}}}{\bigl(1-(-\mathbb L^{-\frac{1}{2}})^rt\bigr)\bigl(1-(-\mathbb L^{\frac{1}{2}})^rt\bigr)}\bigl[\P^{r-1}\bigr]_{\vir}\right)^{[Y]}
\end{align*}
where we used Formula \eqref{Punctual_Signed} in the second equality.
Recall from Example \ref{ex:smooth_VM} that $[Y]_{\vir}=\L^{-3/2}[Y]$.
Then the formula follows by $[Y]_{\vir} \cdot [\mathbb P^{r-1}]_{\vir} = [Y\times \mathbb P^{r-1}]_{\vir}$.
\end{proof}

\begin{remark}
The formula of Theorem \ref{Thm:Motivic_Partition_Function} can also be rewritten as
\[
\mathsf Z_r(Y,(-1)^rt)=
\Exp\left((-1)^rt\left[Y \times \P^{r-1}\right]_{\vir}\Exp\bigl((-\L^{-\frac{1}{2}})^rt+(-\L^{\frac{1}{2}})^rt\bigr)\right).
\]
\end{remark}

\begin{remark}
An abstract power structure formula for the naive motive of $\Quot_Y(F,n)$ was given in \cite{ricolfi2019motive}, generalising the work of Gusein-Zade, Luengo and Melle-Hern\'andez for the case of $\Hilb^nY$ \cite{GLMHilb}.
\end{remark}

\subsection{Motivic Donaldson--Thomas invariants}

Let $F$ be a simple rigid vector bundle on a Calabi--Yau $3$-fold $Y$. Then the motivic weight $[\Quot_Y(F,n)]_{\vir}$ of Definition \ref{Def:Virtual_Motive_Any_Threefold} can be seen as a (rank $r$) motivic Donaldson--Thomas invariant, for it refines the enumerative invariant $\DDT_{F,n}$ computed by \eqref{Virtual_chi_Quot}. Similarly, the motivic partition function $\mathsf Z_r(Y,t)$ of Theorem \ref{Thm:Motivic_Partition_Function} can be seen as a motivic refinement of the enumerative generating function $\DDT_F(q)$ computed in \eqref{eqn:DT_Higher_rank}.

An explicit example of such higher rank refinement (in the context of \emph{stable} sheaves) is provided by a rank $2$ arithmetically Cohen--Macaulay stable bundle $F$ on a generic quintic $3$-fold $Y$ in $\P^4$, cf.~Example \ref{Ex:ACM_Quintic}.

\subsubsection{An open question}
Let $(Y,H)$ be a polarised Calabi--Yau $3$-fold. The moduli space of stable sheaves $\mathcal M^{\st}_H(\alpha)$ with Chern character $\alpha$ has the structure of an oriented d-critical locus in the sense of \cite[Def.~2.31]{Joyce1}. See \cite{NO16} for a proof of existence of orientations, i.e.~square roots of the virtual canonical bundle. Let $F$ be a stable rigid vector bundle on $Y$. Then the connected component
\[
\Phi_n\colon \Quot_Y(F,n) \into \mathcal M^{\st}_H(v_n)
\]
inherits an oriented d-critical structure. In particular, by the results of \cite{BJM}, each orientation $L$ induces a canonical virtual motive
\[
\MF^L_{\Quot_Y(F,n)} \in \mathcal M_{\C}^{\hat\mu}.
\]
It is an interesting question to check whether there exists an orientation $L$ such that the induced virtual motives $\MF^L_{\Quot_Y(F,n)}$ agree with the ones defined in this paper (cf.~Definition \ref{Def:Virtual_Motive_Any_Threefold}). As far as we know, this is still unknown in the rank $1$ case \cite{BBS}, i.e.~for the Hilbert scheme of points.

\subsection{Vanishing cycle cohomology}\label{sec:VC_Cohomology}
It follows from \cite[Thm.~6.3]{Davison16} and the description of the space $\Quot_{\A^3}(\O^{\oplus r},n)$ as a fine moduli space of quiver representations \cite[Prop.~2.4]{BR18}, that the mixed Hodge structure on the total compactly supported \emph{vanishing cycle cohomology}
\[
\mathrm H_c\left(\Quot_{\A^3}(\O^{\oplus r},n),\Phi_{f_{r,n}}\right)
\]
is pure of Tate type for all $n$. 
Here $f_{r,n}\colon \Quot^n_r\to \A^1$ is the regular function defined in \eqref{Quiver_Potential} and 
\[
\Phi_{f_{r,n}}=\varphi_{f_{r,n}}\Q_{\Quot^n_r}
\]
is the perverse sheaf of vanishing cycles. Just as in \cite[Example 6.4]{Davison16}, thanks to purity we can evaluate the Hodge polynomial of the Quot scheme starting from the identity
\be\label{Quot_Motivic}
\sum_{n\geq 0}\,\bigl[\Quot_{\A^3}(\O^{\oplus r},n)\bigr]_{\vir}\cdot \bigl(\mathbb L^{-\frac{r}{2}}t\bigr)^n = \prod_{m=1}^\infty \prod_{k = 0}^{rm-1}\left(1-\L^{k+2-rm}t^m\right)^{-1},
\ee
obtained by applying the renormalisation $t\mapsto \L^{-r/2}t$ to Equation \eqref{id:mot}. According to \cite[Section 1.1]{Davison16}, the Hodge polynomial (more precisely, the Hodge series) of a cohomologically graded mixed Hodge structure $\mathscr L$ is the formal power series
\[
\mathsf h(\mathscr L;x,y,z) = \sum_{a,b,c\in \Z}\dim(\Gr^b_H(\Gr^W_{b+c}(\mathrm H^a(\mathscr L))))x^by^cz^a.
\]
The E-series is given by $\mathsf E(\mathscr L;x,y)=\mathsf h(\mathscr L;x,y,-1)$, and the weight series is defined by the further specialisation
\[
\mathsf w(\mathscr L;q^{1/2})=\mathsf E(\mathscr L;q^{\frac{1}{2}},q^{\frac{1}{2}}),
\]
where $q$ keeps track of cohomological degree.
We have
\[
\mathsf h\bigl(\mathrm H_c\bigl(\Quot_{\A^3}(\O^{\oplus r},n),\Phi_{f_{r,n}}\bigr);x,y,z\bigr) = \mathsf w\left(\mathrm H_c\left(\Quot_{\A^3}(\O^{\oplus r},n),\Phi_{f_{r,n}}\right);q\right)
\]
after the substitution $q^2 = xyz^2$. Thus, by specialising $\L \to q^2$, we deduce from \eqref{Quot_Motivic} the identity
\begin{multline}\label{VC_calculation}
\sum_{n\geq 0}\mathsf h\left(\mathrm H_c\bigl(\Quot_{\A^3}(\O^{\oplus r},n),\Phi_{f_{r,n}}\bigr);x,y,z\right)\cdot (xyz^2)^{-n^2-rn}\cdot t^n = \\ \prod_{m = 1}^\infty \prod_{k=0}^{rm-1}\left(1-(xyz^2)^{k+2-rm}t^m\right)^{-1}.    
\end{multline}

\bibliographystyle{amsplain-nodash}
\bibliography{bib}

\end{document}